\documentclass[11pt,leqno]{amsart}
 \usepackage{graphicx}    % needed for including graphics e.g. EPS, PS
 \usepackage{amsmath}
\usepackage{amssymb}
\usepackage[T1]{fontenc}
\usepackage[latin1]{inputenc}
\usepackage[english]{babel}
\usepackage[arrow, matrix, curve]{xy}
\usepackage{amsthm}
\usepackage{amsmath,amscd}
\usepackage{epic,eepic}
\usepackage{bbm}
\usepackage{psfrag}

\numberwithin{equation}{section}

\DeclareMathOperator{\GL}{GL}
\DeclareMathOperator{\PGL}{PGL}

\parindent=10pt
\parskip=6pt
\baselineskip=14pt

\renewcommand{\phi}{\varphi}

\newcommand{\Acal}{\mathcal{A}}
\newcommand{\Bcal}{\mathcal{B}}

\newcommand{\Fcal}{\mathcal{F}}
\newcommand{\Gcal}{\mathcal{G}}

\newcommand{\Ocal}{\mathcal{O}}

\newcommand{\Abb}{\mathbb{A}}
\newcommand{\Fbb}{\mathbb{F}}
\newcommand{\Gbb}{\mathbb{G}}
\newcommand{\Pbb}{\mathbb{P}}
\newcommand{\Q}{\mathbb{Q}}
\newcommand{\R}{\mathbb{R}}
\newcommand{\Z}{\mathbb{Z}}

\newcommand{\Mfrak}{\mathfrak{M}}
\newcommand{\Nfrak}{\mathfrak{N}}

\newcommand{\Spec}{{\rm Spec}}

\newcommand{\id}{{\rm id}}

\newcommand{\dete}{{\rm det}}
\newcommand{\Res}{{\rm Res}}

\newcommand{\pr}{{\rm pr}}

\newtheorem{theo}{Theorem}[section]
\newtheorem{lem}[theo]{Lemma}
\newtheorem{prop}[theo]{Proposition}
\newtheorem{cor}[theo]{Corollary}
\theoremstyle{remark}
\newtheorem{rem}[theo]{Remark}
\theoremstyle{remark}

\theoremstyle{definition}

\begin{document}

\title[Connectedness of Kisin varieties]{Connectedness of Kisin varieties for $\GL_2$}
\author[E. Hellmann]{Eugen Hellmann}
\begin{abstract}
We show that the Kisin varieties associated to simple $\phi$-modules of rank $2$ are connected in the case of an arbitrary cocharacter.
This proves that the connected components of the generic fiber of the flat deformation ring of an irreducible $2$-dimensional Galois representation of a local field 
are precisely the components where the multiplicities of the Hodge-Tate weights are fixed. 
\end{abstract}
\maketitle

\section{Introduction}

In his paper \cite{Kisin}, Kisin constructs a projective scheme over a finite field of characteristic $p$ whose closed points parametrize the extensions of 
a fixed Galois representation of a local $p$-adic field $K$ in a vector space over a finite field $\Fbb$ of characteristic $p$ to a finite flat group scheme over the ring of integers $\Ocal_K$ of $K$.
In \cite{phimod}, Pappas and Rapoport name these varieties \emph{Kisin varieties}.
Kisin shows that the connected components of this scheme are in bijection with the connected components of the generic fiber of the flat deformation 
ring of the fixed representation in the sense of Ramakrishna (cf. \cite{Rama}).  
The quotient of the flat deformation ring corresponding to those flat representations whose Hodge-Tate weights $0$ and $1$ have the same
multiplicity is of particular interest for modularity lifting theorems (see \cite{Kisin}).
In the case where $K$ is totally ramified over $\Q_p$ and the Galois representation is $2$-dimensional, the connected components of the corresponding variety over $\Fbb_p$ were determined 
by Kisin in \cite[2.5]{Kisin}. In general Kisin conjectures that the connected components are given by open and closed subschemes on which the rank of the maximal multiplicative subobject and the maximal \'etale quotient are fixed, see \cite[2.4.16]{Kisin}. Kisin's connectedness result was generalized by Gee and Imai to the case of an arbitrary local field $K$ (again in the $2$-dimensional case), see \cite{Gee} and \cite{Imai1}, but their results again only concern the case corresponding to deformations whose Hodge-Tate weights $0$ and $1$ (in the generic fiber) have the same multiplicity. 
In \cite{Imai2} and \cite{Imai3}, Imai also determines the type of the zeta function of the Kisin variety in these cases and counts the points
of the variety parametrising extensions of the trivial representation.
In the case of low ramification (more precisely if the ramification index of $K$ is smaller than $p-1$), the scheme is either empty or contains a single point,
by Raynaud's theorem \cite[2.2.3, 3.3.2]{Raynaud}. 
In this paper we prove connectedness of the Kisin variety corresponding to arbitrary Hodge-Tate weights in the case of an irreducible Galois representation of dimension $2$. This generalises our result in \cite{ich}, that the Kisin variety for arbitrary Hodge-Tate weights is geometrically connected if the representation is absolutely irreducible and $K$ is totally ramified.

We now describe our main result.
Let $K$ be a finite extension of $\Q_p$ with residue field $k$. Fix a uniformizer $\pi$ and a compatible system of $p^n$-th roots $\pi_n$ of $\pi$ in a fixed algebraic closure $\bar K$ of $K$. We denote by $K_\infty$ the subfield of $\bar K$ obtained from $K$ by adjoining $\pi_n$ for all $n$. Then the absolute Galois group $G_{K_\infty}$ of $K_\infty$ is isomorphic to the absolute Galois group of a local field in characteristic $p$ and hence the $\Fbb$-linear representations of $G_{K_\infty}$ are described in terms of \'etale $\phi$-modules over $(k\otimes_{\Fbb_p}\Fbb)((u))$ (or equivalently $\phi^n$-modules over $\Fbb((u))$ if $k\subset\Fbb$ and $n=[k:\Fbb_p]$), where $\Fbb$ is a finite field of characteristic $p$, compare \cite[A]{Fontaine}. Here the Frobenius $\phi$ acts on $(k\otimes_{\Fbb_p}\Fbb)((u))$ as the $p$-power map on $k((u))$ and as the identity on $\Fbb$.

Assume that $p\geq3$. By a result of Kisin (building on work of Breuil), the finite flat group schemes over $\Spec\,\Ocal_K$ that are $p$-torsion are described in terms of free $k[[u]]$-modules
$\Mfrak$ together with a $\phi$-linear injection $\Phi:\Mfrak\rightarrow\Mfrak$ such that $u^e\Mfrak\subset \Phi(\phi^\ast\Mfrak)$. Here $e=[K:W(k)[1/p]]$ is the ramification index of $K$ over $\Q_p$.
Under this equivalence of categories, the restriction to $G_{K_\infty}$ of the Galois representation in the generic fiber corresponds up to twist to the \'etale $\phi$-module obtained from $(\Mfrak,\Phi)$ by inverting $u$.
More precisely, let $\Gcal\rightarrow \Spec\,\Ocal_K$ be the finite flat group scheme defined by $(\Mfrak,\Phi)$. We write $V=\Gcal(\bar K)$ for the Galois representation on the generic fiber and assume that $V$ carries an action of $\Fbb$. Then $(\Mfrak[1/u],\Phi)$ is the \'etale $\phi$-module associated to $V(1)|_{G_{K_\infty}}$.

Conversely let us fix a free $(\Fbb\otimes_{\Fbb_p}k)((u))$ module $N$ of rank $d$ together with an isomorphism $\Phi:\phi^\ast N\rightarrow N$. 
Kisin shows that there is a closed subvariety $C_K(\Phi)$ of the affine Grassmannian of the algebraic group $G=\Res_{k/\Fbb_p}\GL_d$ such that the $\Fbb'$-valued points of $C_K(\Phi)$ are the $(\Fbb'\otimes_{\Fbb_p}k)[[u]]$-lattices $\Mfrak'\subset N\widehat{\otimes}_\Fbb\Fbb'$ satisfying
\[u^e\Mfrak'\subset \Phi(\phi^\ast\Mfrak')\subset\Mfrak'.\]  
As the restriction of flat $p$-torsion representations of $G_K={\rm Gal}(\bar K/K)$ to $G_{K_\infty}$ is fully faithful \cite[Theorem 3.4.3]{Breuil}, this scheme indeed parametrises finite flat group scheme models of a given $G_K$-representation.

If we fix a dominant cocharacter of the group $G$, then we can ask for the closed subscheme $C_\nu(\Phi)$ of the affine Grassmannian parametrising lattices $\Mfrak$ such that the elementary divisors of $\Phi(\phi^\ast\Mfrak)$ with respect to $\Mfrak$ are less or equal to $\nu$ in the usual order on dominant coweights (see also the definitions below). In this context $p=2$ is also allowed. We will prove the following Theorem.

\begin{theo}
Fix a dominant cocharacter $\nu$ and a simple $2$-dimensional \'etale $\phi$-module $(N,\Phi)$. Then the scheme $C_\nu(\Phi)$ is geometrically connected.
\end{theo}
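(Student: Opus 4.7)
The plan is to combine a decomposition of $(N,\Phi)$ along embeddings $k\hookrightarrow\bar\Fbb$ with an induction on the length of the cocharacter $\nu$, using the simplicity of $(N,\Phi)$ to prevent the Kisin variety from decomposing along some discrete invariant.

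First I would base change to an algebraic closure $\bar\Fbb$ of $\Fbb$; since only geometric connectedness is at stake, this is harmless. The idempotent decomposition $k\otimes_{\Fbb_p}\bar\Fbb\cong\prod_{\sigma:k\hookrightarrow\bar\Fbb}\bar\Fbb$ induces $N=\bigoplus_\sigma N_\sigma$ and $\Mfrak=\bigoplus_\sigma\Mfrak_\sigma$, with each $N_\sigma$ free of rank $2$ over $\bar\Fbb((u))$, and $\Phi$ becomes a cycle of semilinear isomorphisms $\Phi_\sigma:\phi^\ast N_\sigma\to N_{\phi\sigma}$. The cocharacter $\nu$ becomes a tuple $(\nu_\sigma)_\sigma$ of dominant coweights of $\GL_2$, say $\nu_\sigma=(a_\sigma,b_\sigma)$, and $C_\nu(\Phi)$ parametrises cyclic tuples $(\Mfrak_\sigma)_\sigma$ of $\bar\Fbb[[u]]$-lattices in $N_\sigma$ such that $\Phi_\sigma(\phi^\ast\Mfrak_\sigma)\subset\Mfrak_{\phi\sigma}$ has elementary divisors bounded by $\nu_\sigma$. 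Simplicity of $(N,\Phi)$ translates into the statement that no cyclic tuple of rank-one sublattices can generate a $\Phi$-stable sub-$\phi$-module of $N$.

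Next I would induct on $|\nu|=\sum_\sigma(a_\sigma+b_\sigma)$. The base case is $\nu$ essentially minuscule, where a direct analysis (or Raynaud's theorem combined with simplicity, which rules out the \'etale/multiplicative ambiguity) shows that $C_\nu(\Phi)$ is either empty or a single reduced point. For the inductive step, I would select an embedding $\sigma_0$ with $a_{\sigma_0}>b_{\sigma_0}$ and factor the condition at $\sigma_0$ through an intermediate colength-one lattice $\Mfrak'_{\phi\sigma_0}$ sandwiched between $\Phi_{\sigma_0}(\phi^\ast\Mfrak_{\sigma_0})$ and $\Mfrak_{\phi\sigma_0}$. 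This should produce a forgetful morphism $\pi:C_\nu(\Phi)\to C_{\nu'}(\Phi)$ onto a Kisin variety with smaller cocharacter $\nu'$, whose fibers parametrise the choice of intermediate lattice and are either singletons or copies of $\Pbb^1$. Combining the inductive hypothesis for $C_{\nu'}(\Phi)$ with the connectedness of each fiber then yields connectedness of $C_\nu(\Phi)$.

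The main obstacle will be to verify that the target of $\pi$ really is a Kisin variety of the required shape, and that simplicity of $(N,\Phi)$ is preserved under the reduction $\nu\to\nu'$. In the totally ramified case of \cite{ich} the cycle of embeddings has length one and the shrinking of $\nu$ stays local to a single factor, while in the present situation a modification of $\Mfrak_{\phi\sigma_0}$ propagates cyclically via $\Phi_{\phi\sigma_0},\Phi_{\phi^2\sigma_0},\dots$, and one must ensure that this propagation respects the remaining bounds $\nu_{\phi^i\sigma_0}$. The simplicity hypothesis should be exactly what rules out a decomposition of the fibration into disjoint pieces indexed by an auxiliary discrete invariant (for instance, the rank of a maximal $\Phi$-compatible sub-chain of rank-one lattices), making the inductive connectedness argument go through.
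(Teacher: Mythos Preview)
Your inductive strategy has a structural gap: the morphism $\pi: C_\nu(\Phi) \to C_{\nu'}(\Phi)$ you propose does not exist. In the Kisin variety the lattice $\Mfrak_{\phi\sigma_0}$ is constrained on both sides of the cycle: it must contain $\Phi_{\sigma_0}(\phi^\ast\Mfrak_{\sigma_0})$ with divisors bounded by $\nu_{\sigma_0}$, and $\Phi_{\phi\sigma_0}(\phi^\ast\Mfrak_{\phi\sigma_0})$ must sit in $\Mfrak_{\phi^2\sigma_0}$ with divisors bounded by $\nu_{\phi\sigma_0}$. Replacing $\Mfrak_{\phi\sigma_0}$ by a colength-one sublattice $\Mfrak'_{\phi\sigma_0}$ shrinks the first constraint, but since $\phi^\ast$ multiplies colengths by $p$, the second bound at $\phi^2\sigma_0$ can \emph{grow}; the modified tuple is not a point of any $C_{\nu'}(\Phi)$ with $\nu'\leq\nu$. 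This is the obstacle you flag yourself, and it is not a technicality: Demazure-style inductions work for Schubert varieties precisely because there is no such feedback loop. Note also that your base case is unresolved: minuscule $\nu$ over a general $K$ is exactly the case requiring the separate work of Kisin, Gee and Imai cited in the introduction, and Raynaud's theorem applies only when $e<p-1$. Finally, simplicity of $(N,\Phi)$ is a property of the $\phi$-module alone and has nothing to do with $\nu$, so your worry about ``preserving simplicity under $\nu\to\nu'$'' is misplaced; more seriously, simplicity plays no concrete role anywhere in your inductive step, yet for non-simple $(N,\Phi)$ the Kisin variety is typically disconnected, so it must enter the argument essentially.

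The paper's proof is entirely different and does not induct on $\nu$. It works in the product of Bruhat--Tits trees $\bar\Bcal=\prod_i\bar\Bcal_i$ for $\PGL_2$. Simplicity is used, via Caruso's classification of simple $\phi$-modules, to produce a unique fixed point $P=(P_1,\dots,P_n)\in\bar\Bcal$ of the induced map $\bar\Phi$, to show that no $P_i$ is a vertex, and to show that every $\Mfrak\in C_\nu(\Phi)(\bar\Fbb_p)$ has some index $i$ with $P_i\in[\bar\Mfrak_i,\bar\Phi_{i-1}(\bar\Mfrak_{i-1})]$. One then builds explicit $\Pbb^1$-families inside $C_\nu(\Phi)$ sliding $\bar\Mfrak_i$ along the tree toward the lattice-vertex $Q_i$ closest to $P_i$, and checks separately (by a Schubert-variety argument not requiring simplicity) that the fibers of each projection $\pr_i:C_\nu(\Phi)\to\Fcal_i$ are connected. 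Together these connect every point of $C_\nu(\Phi)$ to one of a small set of canonical lattices $\Mfrak(Q_i)$, which are then shown to lie in a single component.
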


We believe that $C_\nu(\Phi)$ is in fact irreducible and hope to come back to this question and also to discuss some of the structure of $C_\nu(\Phi)$.

Now let $p\geq 3$ and let $\bar\rho:G_K\rightarrow \GL_2(\Fbb)$ be an irreducible continuous representation of $G_K$ with coefficients in a finite extension $\Fbb$ of $\Fbb_p$.
Then the flat deformation functor is pro-representable by a complete local noetherian $W(\Fbb)$-algebra $R^{\rm fl}$.
Let 
\[\mu:\Gbb_{m,\bar\Q_p}\longrightarrow (\Res_{K/\Q_p}\GL_2)_{\bar\Q_p}\]
be a miniscule dominant cocharacter and write $R^{{\rm fl},\mu}$ for the quotient of $R^{\rm fl}[1/p]$ corresponding to those valuations whose Hodge-Tate weights are given by $\mu$. If the cocharacter is not miniscule, i.e. the Hodge-Tate weights are not in $\{0,1\}$, then the corresponding quotient of $R^{\rm fl}$ would be empty. Our main result has the following consequence.
\begin{cor}
The scheme $\Spec(R^{{\rm fl},\mu})$ is connected.
\end{cor}

{\bf Acknowledgements:} I thank M. Rapoport, X. Caruso and N. Imai for their remarks on a preliminary version of this paper and for their interest in this work.
The author was supported by the SFB/TR45 "Periods, Moduli spaces and Arithmetic of Algebraic Varieties" of the DFG (German Research Foundation).

\section{Notations}
Let $k$ be a finite field of characteristic $p>0$ of degree $n=[k:\Fbb_p]$ over $\Fbb_p$. We fix an algebraic closure $\bar\Fbb_p$ of $\Fbb_p$.
Let $G$ denote the reductive group $\Res_{k/\Fbb_p}\GL_2$ over $\Fbb_p$. Then
\begin{equation}\label{prodisom}
\begin{xy}\xymatrix{
G_{\bar\Fbb_p}=G\otimes_{\Fbb_p}\bar\Fbb_p\ar[r]^\cong &\prod_{i=1}^n(\GL_2)_{\bar\Fbb_p}.}
\end{xy}
\end{equation}
The automorphism of $\Res_{k/\Fbb_p}\GL_2$ induced by the absolute Frobenius on $k$ acts on the right hand side by shifting the factors.
The Weil restriction of the Borel subgroup of upper triangular matrices in $\GL_2$ defines a Borel subgroup $B\subset G$ such that
\[\begin{xy}\xymatrix{B\otimes_{\Fbb_p}\bar\Fbb_p\ar[r]^\cong & \prod_{i=1}^n B_i}\end{xy}\]
under the isomorphism in $(\ref{prodisom})$, where $B_i\subset (\GL_2)_{\bar\Fbb_p}$ is the subgroup of upper triangular matrices.\\
Denote by $LG$ (resp. $L^+G$) the loop group (resp. the positive loop group) of $G$, i.e. the ind-group scheme representing the functor $R\mapsto G(R((u)))$ (resp. $R\mapsto G(R[[u]])$) on the category of $\Fbb_p$-algebras. Further we denote by 
\[\Fcal_G=LG/L^+G\]
for the affine Grassmannian of the group $G$. The isomorphism in $(\ref{prodisom})$ induces isomorphisms
\[\begin{xy}\xymatrix{
\Fcal_G\otimes_{\Fbb_p}\bar\Fbb_p\ar[r]^{\hspace{-0.7cm}\cong} & L(G_{\bar\Fbb_p})/L^+(G_{\bar\Fbb_p})\ar[r]^{\hspace{5mm}\cong} & \prod_{i=1}^n\Fcal_i,
}\end{xy}\]
where $\Fcal_i\cong L(\GL_2)_{\bar\Fbb_p}/L^+(\GL_2)_{\bar\Fbb_p}$ is the affine Grassmannian for $\GL_2$ parametrising $\bar\Fbb_p[[u]]$-lattices in $\bar\Fbb_p((u))^2$.
We fix the isomorphism in $(\ref{prodisom})$ and write 
\begin{equation}\label{proji}
\pr_i:G_{\bar\Fbb_p}\longrightarrow (\GL_2)_{\bar\Fbb_p}
\end{equation}
for the projection to the $i$-th factor. We write $\langle -,-\rangle$ for the canonical pairing between characters and cocharacters and fix the characters
\begin{align*}
\dete:&\ \GL_2\longrightarrow \Gbb_m\\
\alpha:&\ T\longrightarrow \Gbb_m,
\end{align*}
where $\dete$ is the usual determinant, $T\subset \GL_2$ is the maximal torus of diagonal matrices and $\alpha$ is the unique dominant root defined by the Borel subgroup of upper triangular matrices, i.e. the character ${\rm diag}(t_1,t_2)\mapsto t_1t_2^{-1}$.\\
Let 
\begin{equation}\label{cocharnu}
\nu:(\Gbb_m)_{\bar\Fbb_p}\longrightarrow G_{\bar\Fbb_p}\cong \prod_{i=1}^n(\GL_2)_{\bar\Fbb_p}
\end{equation}
be a dominant cocharacter defined over the reflex field $\Fbb\subset \bar\Fbb_p$ and fix an $\Fbb$-valued point $A\in LG(\Fbb)=G(\Fbb((u)))$.
Given an $\Fbb_p$-algebra $R$ we write 
\[\phi:(R\otimes_{\Fbb_p}k)((u))\rightarrow (R\otimes_{\Fbb_p}k)((u))\]
for the homomorphism
that is the identity on $R$, the absolute Frobenius on $k$ and that maps $u$ to $u^p$.\\
We also write $\phi$ for the homomorphism $\bar\Fbb_p((u))\rightarrow \bar\Fbb_p((u))$ that is the identity on $\bar\Fbb_p$ and maps $u$ to $u^p$. It will always be clear from the context which $\phi$ is used.
By the construction in \cite[2.c.1]{phimod} there is a reduced projective $\Fbb$-variety
\[C_\nu(A)\subset \Fcal_G\otimes_{\Fbb_p}\Fbb\]
whose $\bar\Fbb_p$-valued points are given by 
\[\left\{ g(\bar\Fbb_p\otimes_{\Fbb_p}k)[[u]]^2\subset (\bar\Fbb_p\otimes_{\Fbb_p}k)((u))^2 \left|
{\begin{array}{*{20}c} 
g^{-1} A \phi(g)\in L^+G(\bar\Fbb_p)u^{\nu'}L^+G(\bar\Fbb_p) \\ \text{for some}\ \nu'\leq\nu.
\end{array}}
\right. \right\}\]
Here $\leq$ is the order on dominant coweights induced by our choice of the Borel and $u^{\nu'}$ is the image of $u\in\Gbb_m(\bar\Fbb_p((u)))$ under $\nu'$.
This variety is called the \emph{(closed) Kisin variety} associated with $\nu$ and $A$, compare \cite[6.a]{phimod}.\\
We will also use a different description of the closed points of $C_\nu(A)$. Let $N=(\bar\Fbb_p\otimes_{\Fbb_p}k)((u))^2$. We can consider the $\phi$-linear map
\begin{equation}\label{linsemilin}
\Phi_A=A(\id\otimes\phi):N\longrightarrow N.
\end{equation} 
We have an isomorphism
\[\begin{xy}\xymatrix{N\ar[r]^{\hspace{-10mm}\cong} & N_1\times\dots \times N_n}\end{xy},\]
where $N_i$ are $2$-dimensional $\bar\Fbb_p((u))$-vector spaces and the map $\Phi_A$ splits up into $\phi$-linear maps
\[\Phi_i:N_i\longrightarrow N_{i+1},\]
where we write $N_{n+1}=N_1$. \\
An $\bar\Fbb_p$-valued point of $\Fcal_G$ can be viewed as an $n$-tupel $\Mfrak=(\Mfrak_1,\dots,\Mfrak_n)$, where $\Mfrak_i\subset N_i$ is an $\bar\Fbb_p[[u]]$-lattice.\\
Given $\Mfrak=(\Mfrak_1,\dots,\Mfrak_n)\in\Fcal_G(\bar\Fbb_p)$ we denote by $(a_i,b_i)\in (\Z^2)_+$ the elementary divisors of $\Phi_{i-1}(\phi^\ast\Mfrak_{i-1})$ with respect to $\Mfrak_i$. \\ Consider the cocharacters $\mu_i(\Mfrak):\Gbb_m\rightarrow \GL_2$ defined by
\[u\longmapsto \begin{pmatrix}u^{a_i} & 0\\ 0 & u^{b_i}\end{pmatrix}.\]
Then the alternative description of $C_\nu(A)(\bar\Fbb_p)$ is given as follows: for $\Mfrak\in\Fcal_G(\bar\Fbb_p)$ we have
\[\Mfrak\in C_\nu(A)(\bar\Fbb_p)\Longleftrightarrow(\mu_1(\Mfrak),\dots,\mu_n(\Mfrak))\leq \nu,\]
with componentwise partition ordering.
To analyse the connected components of the varieties $C_\nu(A)$, we will use the language of Bruhat-Tits buildings (see \cite{Tits} for example). \\
Let $Z(G)$ denote the center of $G$ and write $\bar G=G/Z(G)$.\\
We write $\Bcal=\Bcal(LG(\bar\Fbb_p))$ for the Bruhat-Tits building of $G(\bar\Fbb_p((u)))$ and
$\bar\Bcal=\Bcal(L\bar G(\bar\Fbb_p))$ for the building of the group $\bar G(\bar\Fbb_p((u)))$. Then
\begin{align*}
\Bcal&\cong \Bcal_1\times\dots\times\Bcal_n\\
\bar\Bcal&\cong \bar\Bcal_1\times\dots\times\bar\Bcal_n,
\end{align*}
where $\Bcal_i$ is isomorphic to the building of $\GL_2(\bar\Fbb_p((u)))$ and $\bar\Bcal_i$ is isomorphic to the building of $\PGL_2(\bar\Fbb_p((u)))$, i.e. $\bar\Bcal_i$ is a topological space that is isomorphic to a tree where the link of every vertex is parametrized by $\mathbb{P}^1(\bar\Fbb_p)$. We write 
\[d_i:\bar\Bcal_i\times\bar\Bcal_i\longrightarrow \R\]
for the Weyl equivariant distance in the tree $\bar\Bcal_i$, normalized such that the distance of two neighbouring vertices is equal to $1$.
For a $\bar\Fbb_p[[u]]$-lattice $\Mfrak_i\subset N_i\cong \bar\Fbb_p((u))^2$ we write $\bar\Mfrak_i\in\bar\Bcal_i$ or sometimes $[\Mfrak_i]$ for its homothety class.
\begin{prop}\label{Abbinbuilding}
The map $\bar\Mfrak_i\mapsto [\Phi_i(\phi^\ast \Mfrak_i)]$ extends to a map $\bar\Phi_i:\bar\Bcal_i\rightarrow \bar\Bcal_{i+1}$. This map has the following properties:\\
\noindent {\rm (i)} $\bar\Phi_i$ takes geodesics to geodesics.\\
\noindent {\rm (ii)} Let $x,y\in\bar\Bcal_i$, then $d_{i+1}(\bar\Phi_i(x),\bar\Phi_i(y))=pd_i(x,y)$.
\end{prop}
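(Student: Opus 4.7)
The strategy is to reduce everything to a single computation of elementary divisors under $\phi$-pullback.

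First I would check well-definedness on vertices. Since base change along $\phi$ sends $u$ to $u^p$, replacing $\Mfrak$ by $u^k\Mfrak$ replaces $\phi^\ast\Mfrak$ by $u^{pk}\phi^\ast\Mfrak$; the $\bar\Fbb_p[[u]]$-linearity of the linearized $\Phi_i$ then gives $\Phi_i(\phi^\ast(u^k\Mfrak)) = u^{pk}\Phi_i(\phi^\ast\Mfrak)$, which is homothetic to $\Phi_i(\phi^\ast\Mfrak)$.

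The main computation is that $\phi^\ast$ multiplies elementary divisors by $p$. If in a basis $\Mfrak = \langle e_1, e_2\rangle$ and $\Mfrak' = \langle u^a e_1, u^b e_2\rangle$, then using $u^a e_1 \otimes 1 = e_1 \otimes u^{pa} = u^{pa}(e_1 \otimes 1)$, one finds $\phi^\ast\Mfrak' = \langle u^{pa}(e_1 \otimes 1), u^{pb}(e_2 \otimes 1)\rangle$ inside $\phi^\ast\Mfrak = \langle e_1 \otimes 1, e_2 \otimes 1\rangle$ with elementary divisors $(pa, pb)$. The linear isomorphism induced by $\Phi_i$ preserves elementary divisors, so pairs of neighboring vertices in $\bar\Bcal_i$ (elementary divisors $(0,1)$) map to pairs of vertices at distance exactly $p$ in $\bar\Bcal_{i+1}$.

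For (i) I would use the standard description of geodesic segments in the $\PGL_2$-tree: vertices $[\Mfrak_0], [\Mfrak_1], \ldots, [\Mfrak_m]$ form a geodesic of length $m$ if and only if representatives can be chosen with $\Mfrak_0 \supsetneq \Mfrak_1 \supsetneq \cdots \supsetneq \Mfrak_m$ and $\Mfrak_0/\Mfrak_m \cong \bar\Fbb_p[u]/(u^m)$ cyclic, equivalently $\Mfrak_j = \langle e_1, u^j e_2\rangle$ in a suitable basis of $\Mfrak_0$. The computation above then yields that $\Phi_i(\phi^\ast\Mfrak_0)/\Phi_i(\phi^\ast\Mfrak_m)$ is cyclic of length $pm$, and the images $[\Phi_i(\phi^\ast\Mfrak_j)]$ sit at positions $0, p, 2p, \ldots, mp$ on the unique geodesic from $[\Phi_i(\phi^\ast\Mfrak_0)]$ to $[\Phi_i(\phi^\ast\Mfrak_m)]$. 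I would extend $\bar\Phi_i$ from vertices to the whole building by affine interpolation on each edge, sending each edge of $\bar\Bcal_i$ to the unique length-$p$ geodesic connecting the images of its endpoints. This gives a continuous map that carries each geodesic to a geodesic, and (ii) then follows from the per-edge scaling factor $p$ combined with preservation of geodesics under concatenation.

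The main obstacle is to rule out \emph{folding} on longer geodesics, i.e., that the images of consecutive edges could backtrack and so the composite image fails to be a geodesic. This is controlled exactly by the cyclic-quotient criterion above: since $\phi^\ast$ sends $\bar\Fbb_p[u]/(u^m)$ to $\bar\Fbb_p[u]/(u^{pm})$, which remains cyclic, no backtracking can occur, and the global conclusions (i) and (ii) follow.
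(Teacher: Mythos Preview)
Your argument is correct. The paper does not actually prove this proposition; it simply refers to \cite[6.b.2, 6.b.3]{phimod}, where the analogous statement is established in the general setting of a reductive group by working with apartments and the action of $LG$ on the building. Your route is the concrete $\GL_2$ version: everything is reduced to the single observation that $\phi^\ast$ multiplies elementary divisors by $p$, together with the cyclic--quotient characterisation of geodesic segments in the $\PGL_2$ tree.

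Two small remarks. First, your well--definedness check on vertices is fine, but it is worth noting explicitly that the affine interpolation on edges is unambiguous precisely because $\bar\Bcal_{i+1}$ is a tree: between the images of two adjacent vertices there is a \emph{unique} geodesic of length $p$, so there is no choice to be made. Second, your ``no folding'' argument via cyclicity of $\phi^\ast(\bar\Fbb_p[[u]]/(u^m))\cong\bar\Fbb_p[[u]]/(u^{pm})$ is exactly the right point, and it is what makes your per--edge construction globalise: it shows that for any three consecutive vertices $[\Mfrak_{j-1}],[\Mfrak_j],[\Mfrak_{j+1}]$ on a geodesic, the two length--$p$ image segments meet only at $[\Phi_i(\phi^\ast\Mfrak_j)]$. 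In a tree this is equivalent to saying the concatenation has length $2p$, which is precisely what the cyclic quotient of length $2p$ gives you.

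Compared with the approach in \cite{phimod}, yours is more elementary and entirely self--contained for rank $2$; theirs has the advantage of applying uniformly to any reductive $G$, where the lattice/cyclic--quotient picture is not available and one instead argues via the identification of apartments with $X_\ast(T)\otimes\R$ and the effect of $\phi$ on cocharacters.
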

\begin{proof}
The proof is the same as in \cite[6.b.2, 6.b.3]{phimod}.
\end{proof}
We want to reformulate the definition of the Kisin variety $C_\nu(A)$ in terms of the building. This is done in the following Proposition.
\begin{prop}\label{ptsinbuilding}
Let $\nu$ be a cocharacter as in $(\ref{cocharnu})$, defined over the reflex field $\Fbb$ and fix a point $A\in LG(\Fbb)$. Define the following integers:
\begin{align*}
r_i&=\langle \alpha\circ\pr_i,\nu\rangle\ ,\\
m_i&=\langle \dete\circ\pr_i,\nu\rangle\ .
\end{align*}
Let $\Mfrak=(\Mfrak_1,\dots,\Mfrak_n)\in\Fcal_G(\bar\Fbb_p)$ be an $n$-tuple of lattices. Then
\[\Mfrak\in C_\nu(A)(\bar\Fbb_p)\Longleftrightarrow  \begin{cases}
d_i(\bar\Mfrak_i,\bar\Phi_{i-1}(\bar\Mfrak_{i-1}))\leq r_i \\ 
\dete\ \Mfrak_i=u^{m_i}\dete\ \Phi_{i-1}(\phi^\ast\Mfrak_{i-1}). 
\end{cases}\]
Here in the last identity we mean equality of lattices in $\dete\ N_i\cong\bar\Fbb_p((u))$.
\end{prop}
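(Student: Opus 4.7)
The plan is to carry out the whole analysis componentwise after passing to $\bar\Fbb_p$, using the product decomposition $G_{\bar\Fbb_p}\cong\prod_{i=1}^n(\GL_2)_{\bar\Fbb_p}$. Write $\nu=(\nu^{(1)},\dots,\nu^{(n)})$ with $\nu^{(i)}(u)=\mathrm{diag}(u^{s_i},u^{t_i})$ and $s_i\geq t_i$; unfolding the definitions one reads off $r_i=s_i-t_i$ and $m_i=s_i+t_i$. By the alternative description of $C_\nu(A)(\bar\Fbb_p)$ recalled in the preceding paragraph, $\Mfrak\in C_\nu(A)(\bar\Fbb_p)$ is equivalent to $\mu_i(\Mfrak)\leq\nu^{(i)}$ for each $i$ in the dominance order on dominant cocharacters of $\GL_2$.

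Next I would exploit the very simple root structure of $\GL_2$: the only positive coroot is $\alpha^\vee=(1,-1)$, so for dominant pairs of integers the inequality $(a_i,b_i)\leq(s_i,t_i)$ holds if and only if the difference is a non-negative multiple of $(1,-1)$. This single inequality decomposes unambiguously into two statements, namely the equality $a_i+b_i=s_i+t_i$ (both cocharacters lie in the same coset modulo the coroot lattice, i.e.\ have the same central character) and the inequality $a_i-b_i\leq s_i-t_i$ (the non-trivial inequality in the direction of the positive root).

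It then remains to match these two statements with the two displayed lines of the proposition. For the first, by the very definition of the elementary divisors $(a_i,b_i)$ of $\Phi_{i-1}(\phi^\ast\Mfrak_{i-1})$ relative to $\Mfrak_i$, the two lattices in $\dete N_i\cong\bar\Fbb_p((u))$ differ by the scalar $u^{a_i+b_i}$; hence the displayed equality of determinants is precisely $a_i+b_i=m_i=s_i+t_i$. For the second, I would invoke the standard description of the $\PGL_2$-tree metric: if $(a_i,b_i)$ with $a_i\geq b_i$ is the relative position, then rescaling by $u^{-b_i}$ places $\Phi_{i-1}(\phi^\ast\Mfrak_{i-1})$ inside $\Mfrak_i$ as a sublattice with cyclic cokernel of length $a_i-b_i$, and by the normalization fixed in the text this colength is exactly $d_i(\bar\Mfrak_i,\bar\Phi_{i-1}(\bar\Mfrak_{i-1}))$. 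Thus the displayed distance bound translates to $a_i-b_i\leq r_i=s_i-t_i$.

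The only point requiring care is the bookkeeping of sign conventions when matching elementary divisors with pairings of characters against cocharacters and with the normalization of the tree metric; once the conventions fixed in the preceding section are used consistently, both identifications are essentially one line, and combining them over all $i$ gives the proposition.
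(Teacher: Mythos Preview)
Your proposal is correct and follows essentially the same route as the paper's proof. Both arguments pass to the product decomposition over $\bar\Fbb_p$, reduce to the componentwise dominance condition $(a_i,b_i)\leq(s_i,t_i)$ for $\GL_2$, split this into the equality $a_i+b_i=m_i$ and the inequality $a_i-b_i\leq r_i$, and then identify these with the determinant and tree-distance conditions respectively; the only cosmetic difference is that the paper starts from the double-coset formulation $g^{-1}A\phi(g)\in L^+G\,u^{\nu'}L^+G$ and unwinds it, whereas you invoke directly the ``alternative description'' in terms of $\mu_i(\Mfrak)$ that the paper had already set up in the paragraph preceding the proposition.
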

\begin{proof}
Let $\Mfrak=g(\bar\Fbb\otimes_{\Fbb_p}k)[[u]]^2\in C_\nu(A)(\bar\Fbb_p)$. Write $\nu'$ for the cocharacter such that
\begin{equation}\label{konjbed}
g^{-1}A\phi(g)\in L^+G(\bar\Fbb)u^{\nu'}L^+G(\bar\Fbb).
\end{equation}
Under the isomorphism $(\ref{prodisom})$ we set $g=(g_1,\dots,g_n)$ and $A=(A_1,\dots,A_n)$, where $g_i\in \GL_2(\bar\Fbb((u)))$ and $A_i\in \GL_2(\bar\Fbb((u)))$.
Further the cocharacter $\nu'$ is given by an $n$-tupel $(\nu'_1,\dots,\nu'_n)$, where $\nu'_i:\Gbb_m\rightarrow \GL_2$ is a cocharacter.\\
Then $\Mfrak=(\Mfrak_1,\dots,\Mfrak_n)$ with $\Mfrak_i=g_i \bar\Fbb[[u]]^2\subset N_i$ and $(\ref{konjbed})$ translates to 
\begin{equation}\label{konjbed2}
g_{i+1}^{-1}A_{i+1}\phi(g_i)\in \GL_2(\bar\Fbb[[u]])u^{\nu'_{i+1}} \GL_2(\bar\Fbb[[u]])
\end{equation}  
for $i=1,\dots,n$ where we again identify $g_{n+1}=g_1$ and $A_{n+1}=A_1$.\\
Suppose that the cocharacter $\nu'_i$ is given by
\[u\longmapsto \begin{pmatrix} u^{a_i} & 0 \\ 0 & u^{b_i}\end{pmatrix};\]
then $(\ref{konjbed2})$ means that the elementary divisors of $\Phi_i(\phi^\ast\Mfrak_i)=A_{i+1}\phi(g_i)\bar\Fbb_p[[u]]^2$ with respect to $\Mfrak_{i+1}=g_{i+1}\bar\Fbb_p[[u]]^2$ are given by $(a_{i+1},b_{i+1})$. Hence $(\ref{konjbed2})$ is equivalent to 
\begin{align*}
d_{i+1}(\bar\Mfrak_{i+1},\bar\Phi_i(\bar\Mfrak_i))=a_{i+1}-b_{i+1}\\
\dete\ \Phi_i(\phi^\ast\Mfrak_i)=u^{a_{i+1}+b_{i+1}}\dete\ \Mfrak_{i+1}.
\end{align*}
Now the claim follows, as $\nu'\leq\nu$ is equivalent to
\begin{align*}
a_i+b_i&=m_i=\langle\dete\circ\pr_i,\nu\rangle\\
a_i-b_i&\leq r_i=\langle\alpha\circ\pr_i,\nu\rangle,& \text{for all}\ i=1,\dots,n.
\end{align*}
\end{proof}
%%%%%%%%%%%%%%%%%%%%%%%%%%%%%%%%%%%%%%%%%%%%%%%%%%%%%%%%%%%%%%%%%%%%%%%%%%%%%%%%%%%
\section{Some Lemmas in the building}
Before we proceed with the investigation of the Kisin variety, we need two Lemmas about some subvarietes of the affine Grassmannian defined in terms of the building. 
\begin{lem}\label{lemSchubertvar}
Let $m\in\Z$ and $x_1,x_2\in\Bcal(L(\PGL_2)(\bar\Fbb_p))$. Write $d$ for the distance function on $\Bcal(L(\PGL_2)(\bar\Fbb_p))$.
For $r_1,r_2\in\Q$, the closed subvariety 
\[X\subset \Fcal_{\GL_2}=L\GL_2/L^+\GL_2\]
whose closed points are given by
\[X(\bar\Fbb_p)= \left\{ \Mfrak \subset \bar\Fbb_p((u))^2 \left| 
{\begin{array}{*{20}c} 
\dete\ \Mfrak=u^m\bar\Fbb_p[[u]]\subset \bar\Fbb_p((u)) \\ 
d(\bar\Mfrak,x_1)\leq r_1 \\ 
d(\bar\Mfrak,x_2)\leq r_2 
\end{array}} \right.
\right\}
\]
is isomorphic to a Schubert variety in $\Fcal_{\GL_2}$, i.e it is the closure of some $\GL(\mathfrak{N})$ orbit for a suitable lattice $\Nfrak\subset \bar\Fbb_p((u))^2$.
Especially $X$ is connected.
\end{lem}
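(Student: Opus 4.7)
The plan is to realise $X$ as an ``interval'' $\{\Mfrak \mid \Mfrak^+\subset\Mfrak\subset\Mfrak^-,\ \dete\,\Mfrak=u^m\}$ of lattices, and then to observe that the two bounding lattices $\Mfrak^\pm$ are homothetic, so that $X$ becomes the closure of a single $\GL(\Mfrak^-)$-orbit, i.e.\ a Schubert variety.

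For simplicity assume each $x_i=[\Nfrak_i]$ is a vertex (the case that $x_i$ lies in the interior of an edge is handled analogously using the two adjacent vertices). Set $d_i=v_u(\dete\,\Nfrak_i)$. By elementary divisor theory, the condition ``$d(\bar\Mfrak,x_i)\leq r_i$ together with $\dete\,\Mfrak=u^m$'' is equivalent to the pair of containments
\[u^{b_i}\Nfrak_i\subset \Mfrak\subset u^{a_i}\Nfrak_i,\]
where $b_i=\lfloor(m-d_i+r_i)/2\rfloor$ and $a_i=\lceil(m-d_i-r_i)/2\rceil$ are the largest resp.\ smallest exponents compatible with an elementary divisor decomposition of $\Mfrak$ with respect to $\Nfrak_i$ satisfying $b-a\leq r_i$. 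Conjoining the two conditions for $i=1,2$ yields $\Mfrak^+\subset\Mfrak\subset \Mfrak^-$ with
\[\Mfrak^+:=u^{b_1}\Nfrak_1+u^{b_2}\Nfrak_2,\qquad \Mfrak^-:=u^{a_1}\Nfrak_1\cap u^{a_2}\Nfrak_2.\]

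The crucial claim is that $\Mfrak^+=u^k\Mfrak^-$ for some integer $k\geq 0$. To prove it, pick a basis $(g_1,g_2)$ of $\Nfrak_2$ diagonalising $\Nfrak_1$, so that $\Nfrak_1=\langle u^\alpha g_1,u^\beta g_2\rangle$ with $\alpha\leq\beta$ and $\beta-\alpha=D=d(x_1,x_2)$. In this basis both $\Mfrak^\pm$ are diagonal lattices $\langle u^{p_i}g_i\rangle_{i=1,2}$ resp.\ $\langle u^{q_i}g_i\rangle_{i=1,2}$ with
\[p_1=\min(\alpha+b_1,b_2),\ p_2=\min(\beta+b_1,b_2),\ q_1=\max(\alpha+a_1,a_2),\ q_2=\max(\beta+a_1,a_2).\]
A short case analysis on the sign of $(r_2-r_1)\pm D$ then yields $p_1-q_1=p_2-q_2$, which is the sought homothety.

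Granting the claim, $X$ is the closed subvariety of $\Fcal_{\GL_2}$ consisting of lattices $\Mfrak$ with $u^k\Mfrak^-\subset\Mfrak\subset\Mfrak^-$ and $\dete\,\Mfrak=u^m$. This is precisely the closed Schubert variety attached to the cocharacter $(k,0)$ with reference lattice $\Nfrak:=\Mfrak^-$, i.e.\ the closure of the $\GL(\Mfrak^-)$-orbit of $\langle u^kg_1,g_2\rangle$, which is connected since $\GL(\Mfrak^-)\cong\GL_2(\bar\Fbb_p[[u]])$ is a connected ind-group. The main technical obstacle is the case analysis establishing $p_1-q_1=p_2-q_2$, which splits naturally into the three regimes $r_2-r_1\geq D$, $|r_1-r_2|\leq D$, and $r_2-r_1\leq -D$, and requires careful handling of the parity corrections in the definitions of $a_i,b_i$ (when $m-d_i+r_i$ is odd the effective radius $b_i-a_i$ drops by one).
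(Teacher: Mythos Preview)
Your approach is correct and genuinely different from the paper's. The paper argues geometrically in the tree: it shows directly that the intersection of the two balls $\{d(\,\cdot\,,x_i)\le r_i\}$ is again a ball, by locating its center $y$ on the geodesic $[x_1,x_2]$ at distance $\tfrac{1}{2}(d(x_1,x_2)+r_1-r_2)$ from $x_1$ with radius $R=\tfrac{1}{2}(r_1+r_2-d(x_1,x_2))$, and then (if $y$ is not a vertex) slides $y$ to an adjacent vertex using the parity constraint coming from the determinant. Your argument instead stays on the lattice side, converting each ball-plus-determinant condition into a sandwich $u^{b_i}\Nfrak_i\subset\Mfrak\subset u^{a_i}\Nfrak_i$ and then intersecting. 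The paper's route makes the Schubert center explicit with no computation; yours has the advantage of avoiding any discussion of non-vertex points in the tree and of being purely in terms of elementary divisors.

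Two remarks. First, the ``short case analysis'' you anticipate for $p_1-q_1=p_2-q_2$ is unnecessary: from $\lfloor (s+r)/2\rfloor+\lceil (s-r)/2\rceil=s$ for $s\in\Z$ and arbitrary $r$, one gets $a_i+b_i=m-d_i$ identically; combined with $d_1-d_2=\alpha+\beta$ this gives
\[
(b_2-b_1-\alpha)=(\beta+a_1-a_2),
\]
so the two expressions being clamped to $[0,D]$ in your $\min/\max$ formulas are literally equal, and the homothety drops out without splitting into regimes or worrying about parity corrections. Second, your reduction of the non-vertex case to ``the two adjacent vertices'' deserves one more sentence: for a vertex $\bar\Mfrak$ and $x_i$ interior to the edge $[v_i,w_i]$, the condition $d(\bar\Mfrak,x_i)\le r_i$ is equivalent to the \emph{conjunction} $d(\bar\Mfrak,v_i)\le r_i+d(v_i,x_i)$ and $d(\bar\Mfrak,w_i)\le r_i+d(w_i,x_i)$, so one obtains up to four vertex constraints, all lying in a common apartment, and the same $\Mfrak^\pm$ computation goes through.
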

\begin{proof}
Denote by $[x_1,x_2]\subset \Bcal(\PGL_2(\bar\Fbb_p((u))))$ the geodesic between $x_1$ and $x_2$. We assume that $d(x_1,x_2)\leq r_1+r_2$, as otherwise $X$ is empty. 
Let $y\in[x_1,x_2]$ be the unique point with 
\[d(y,x_1)=\tfrac{1}{2}(d(x_1,x_2)+r_1-r_2).\]
Further we set
\[R=\tfrac{1}{2}(r_1+r_2-d(x_1,x_2)).\]
Then
\begin{equation}\label{onecirc}
X(\bar\Fbb_p)= \left\{
\Mfrak\subset \bar\Fbb_p((u))^2 \left|
{\begin{array}{*{20}c}
\dete\ \Mfrak=u^m\bar\Fbb[[u]]\\
d(\bar\Mfrak,y)\leq R
\end{array}}\right.
\right\},
\end{equation}
which can be seen as follows:\\
If $d(\bar \Mfrak,y)\leq R$, then 
\begin{align*}
d(\bar\Mfrak,x_1) &\leq d(\bar \Mfrak,y)+d(y,x_1)\leq R+d(y,x_1)=r_1
\end{align*}
and similarly for $x_2$. If conversely $d(\bar\Mfrak,x_i)\leq r_i$, then we choose an apartment containing $x_1$, $x_2$ and $y$. Denote by $z$ the projection of $\bar\Mfrak$ to this apartment and assume that $z$ is contained in the half line starting at $y$ and not containing $x_2$. Then
\[d(\bar\Mfrak,y)=d(\bar\Mfrak,x_2)-d(y,x_2)\leq r_2-d(y,x_2)=R.\]

We may replace $R$ by the smallest number such that the equality $(\ref{onecirc})$ is possible. 
If $y$ is a vertex, then we are done.

Assume this is not the case and denote by $z_1$ and $z_2$ the endpoints of the edge containing $y$.
Fix a lattice $\Mfrak_0\in X(\bar\Fbb_p)$ such that $d(\bar\Mfrak_0,y)=R$. Such a lattice exists by our definition of $R$. Then we take for $z$ the unique element of $\{z_1,z_2\}$ such that $z\in[\bar\Mfrak_0,y]$. If we define 
\begin{equation*}\label{tildeR}
\widetilde{R}=\max\{d(z,\bar\Mfrak)\mid \Mfrak\in X(\bar\Fbb_p)\},
\end{equation*}
then we easily see that $\widetilde{R}=R-d(z,y)$. We now have
\begin{equation*}\label{Xpts}
X(\bar\Fbb_p)=\left\{ \Mfrak\left| 
{\begin{array}{*{20}c}
\dete\ \Mfrak=u^m\bar\Fbb_p[[u]] \\
d(\bar\Mfrak,z)\leq \widetilde{R}
\end{array}}\right.
\right\}.\end{equation*}
The inclusion "$\subset$" is obvious from the definition of $\widetilde{R}$. The converse inclusion follows from 
\[d(\bar\Mfrak,z)=\begin{cases}
d(\bar\Mfrak,y)-d(z,y) & \text{if}\ y\notin[\bar\Mfrak,z] \\
d(\bar\Mfrak,y)+d(z,y) & \text{if}\ y\in[\bar\Mfrak,z],
\end{cases}\]
the definition of $\widetilde{R}$ and the fact that the distance between the homothety classes of two lattices with equal determinant is even.

It now follows that $X$ is the closure of the $\GL(\mathfrak{N})$-orbit of $\widetilde{\Mfrak}$,
where $\mathfrak{N}$ is some lattice with homothety class $z$ and $\widetilde{\Mfrak}$ is some lattice with $\dete\ \widetilde{\Mfrak}=u^m\bar\Fbb_p[[u]]$ and $d([\widetilde{\Mfrak}],[\mathfrak{N}])=\widetilde{R}$.
\end{proof}
\begin{prop}\label{propconnected}
Let $s\geq 3$ and let $N_1,\dots,N_s$ be $2$-dimensional $\bar\Fbb_p((u))$-vector spaces together with fixed isomorphisms with $\bar\Fbb_p((u))^2$, and let $\Fcal_i$ be the affine Grassmannian of $N_i$. Suppose that there are $\phi$-linear maps 
\[\Phi_i:N_i\longrightarrow N_{i+1}\]
such that their linearisations are isomorphisms.
Let $\Mfrak_1\subset N_1$ and $\Mfrak_s\subset N_s$ be lattices. Further we fix $r_2,\dots,r_s\in\mathbb{Q}$ and $m_2,\dots,m_{s-1}\in\Z$. Then the closed subvariety $X\subset \Fcal_2\times\dots\times\Fcal_{s-1}$ defined by
\[X(\bar\Fbb_p)=\left\{(\Mfrak_2,\dots,\Mfrak_{s-1})\in\Fcal_2\times\dots\times\Fcal_{s-1}\left|
{\begin{array}{*{20}c}
\dete\ \Mfrak_i=u^{m_i}\bar\Fbb_p[[u]]\\
d_i(\bar\Phi_{i-1}(\bar\Mfrak_{i-1}),\bar\Mfrak_i)\leq r_i \\ \text{for}\ i=2,\dots,s
\end{array}}\right.
\right\}\]
is connected. 
\end{prop}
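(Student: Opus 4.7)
The plan is to argue by induction on $s$, with the base case $s=3$ and the fibres in the inductive step both reducing to Lemma~\ref{lemSchubertvar}.

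For $s=3$, the variety $X\subset\Fcal_2$ consists of lattices $\Mfrak_2\subset N_2$ with $\dete\,\Mfrak_2=u^{m_2}$, $d_2(\bar\Phi_1(\bar\Mfrak_1),\bar\Mfrak_2)\leq r_2$, and $d_3(\bar\Phi_2(\bar\Mfrak_2),\bar\Mfrak_3)\leq r_3$. By Proposition~\ref{Abbinbuilding} the map $\bar\Phi_2\colon\bar\Bcal_2\to\bar\Bcal_3$ takes geodesics to geodesics and scales distances by $p$, and so is an isometric embedding onto a subtree of $\bar\Bcal_3$ up to scaling. Taking the nearest-point projection of $\bar\Mfrak_3$ onto the image $\bar\Phi_2(\bar\Bcal_2)$, the third condition is equivalent to a single ball condition $d_2(\bar\Mfrak_2,y_0)\leq r_3'$ in $\bar\Bcal_2$, for an explicit $y_0\in\bar\Bcal_2$ and $r_3'\in\Q$. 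Hence $X$ satisfies the hypotheses of Lemma~\ref{lemSchubertvar} and is a connected Schubert variety.

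For the inductive step $s\geq 4$, I would consider the proper projection
\[\pi\colon X\longrightarrow \Fcal_2\times\cdots\times\Fcal_{s-2}\]
forgetting the last factor $\Fcal_{s-1}$. The fibre of $\pi$ above a point $(\Mfrak_2,\dots,\Mfrak_{s-2})$ of the image is precisely a variety of the base-case type in $\Fcal_{s-1}$, with the two ball conditions coming from $\bar\Phi_{s-2}(\bar\Mfrak_{s-2})$ and (after the same inversion trick applied to $\bar\Phi_{s-1}$ on $B(\bar\Mfrak_s,r_s)$) from some $z_0\in\bar\Bcal_{s-1}$; hence each non-empty fibre is a connected Schubert variety by Lemma~\ref{lemSchubertvar}. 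For the image $X'=\pi(X)$, the tree-theoretic fact that two closed balls in $\bar\Bcal_{s-1}$ intersect exactly when the distance between their centres is at most the sum of their radii shows that $X'$ is defined inside $\Fcal_2\times\cdots\times\Fcal_{s-2}$ by the original first $s-3$ constraints together with the single replacement condition
\[d_{s-1}(\bar\Phi_{s-2}(\bar\Mfrak_{s-2}),z_0)\leq r_{s-1}+r_s'.\]
This is an instance of Proposition~\ref{propconnected} with $s$ replaced by $s-1$ and $(\bar\Mfrak_s,r_s)$ replaced by $(z_0,r_{s-1}+r_s')$, so $X'$ is connected by the induction hypothesis. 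A proper surjection with connected fibres onto a connected target has connected source, so $X$ is connected.

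The main technical obstacle is that the reduced endpoint $z_0\in\bar\Bcal_{s-1}$ produced in each step of the induction need not be a homothety class of a lattice---it is typically an interior point of an edge of the tree---and the combined radius is typically a rational number rather than an integer. One therefore has to run the induction for a slight generalisation of Proposition~\ref{propconnected} in which the endpoints are allowed to be arbitrary points of the buildings $\bar\Bcal_1,\bar\Bcal_s$ and the radii $r_i$ are allowed to be rational; fortunately Lemma~\ref{lemSchubertvar} is already stated in that generality, so no additional work is required in the fibre analysis. A secondary point to verify is a parity compatibility: namely that a non-empty intersection of the two relevant balls in the bipartite tree $\bar\Bcal_{s-1}$ actually contains a vertex of the correct determinant parity, so that $\pi(X)$ genuinely coincides with the variety cut out by the combined ball condition. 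This is straightforward once one notes that in the intended applications the parameters $(m_i,r_i)$ come from Proposition~\ref{ptsinbuilding}, where $m_i+r_i$ is automatically even, forcing the required parities along the chain.
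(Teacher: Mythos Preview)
Your argument is correct and close in spirit to the paper's, but the two inductions are organised ``dually'', and this difference has a small technical cost on your side that the paper's choice avoids.

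The paper projects to the \emph{first} factor $\Fcal_2$ rather than forgetting the last one. Over a fixed $\Mfrak_2$, the fibre is exactly an instance of Proposition~\ref{propconnected} with $s-1$ in place of $s$, the new endpoints being the genuine lattices $\Mfrak_2$ and $\Mfrak_s$; so the induction hypothesis applies verbatim, with no need to allow non-lattice endpoints. The image $\pr_2(X)\subset\Fcal_2$ is then computed directly by successively projecting $\bar\Mfrak_s$ onto the nested subtrees $(\bar\Phi_{s-1}\circ\cdots\circ\bar\Phi_{s-i})(\bar\Bcal_{s-i})$ and pulling back through the $\bar\Phi_j$, ending with a single two-ball description in $\bar\Bcal_2$ to which Lemma~\ref{lemSchubertvar} applies. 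Thus the paper uses induction for the fibres and Lemma~\ref{lemSchubertvar} for the image, whereas you use Lemma~\ref{lemSchubertvar} for the fibres and induction for the image.

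What this buys the paper is precisely the point you flag as the ``main technical obstacle'': since both endpoints in its inductive call remain honest lattices, no generalised statement is needed. Your route is equally valid once one proves the mild generalisation allowing the terminal endpoint to be an arbitrary point of $\bar\Bcal_{s-1}$ with a rational radius; as you note, Lemma~\ref{lemSchubertvar} is already stated in that generality, so the generalised base case is free. Regarding your parity remark: the issue is not special to the intended application. Because $\det\,\Mfrak_{s-2}$ is fixed, the vertex $\bar\Phi_{s-2}(\bar\Mfrak_{s-2})$ has fixed colour in the bipartite tree, so any parity defect in ``the two balls meet at a vertex of colour $m_{s-1}$'' is uniform in $\Mfrak_{s-2}$ and can be absorbed into the combined radius; the image $\pi(X)$ is therefore still cut out by a single ball condition of the required shape, and the induction goes through for arbitrary $(m_i,r_i)$, not only those coming from Proposition~\ref{ptsinbuilding}.
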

\begin{proof}
We proceed by induction. \\
Let $s=3$. Then we deduce the claim from Lemma $\ref{lemSchubertvar}$ as follows:
The $\bar\Fbb_p$-valued points of the subvariety $X\subset \Fcal_2$ are the lattices $\Mfrak_2\subset N_2$ such that $\dete\ \Mfrak_2=u^{m_2}\bar\Fbb_p[[u]]$ and 
\begin{align*}
d_2(\Phi_1(\bar\Mfrak_1),\Mfrak_2)&\leq r_2\\
d_2(\bar\Mfrak_2,\bar\Phi_2^{-1}(\bar\Mfrak_3))&\leq r_3/p,
\end{align*}
where we write $\bar\Phi_2^{-1}(\bar\Mfrak_3)$ for the preimage of $\bar\Mfrak_3$. This does not necessarily lie in the building $\bar\Bcal_2=\Bcal(\PGL(N_2))$, but is visible after some ramified extension of $\bar\Fbb_p((u))$. Replacing $\bar\Phi_2^{-1}(\bar\Mfrak_3)$ by its projection to $\bar\Bcal_2$ and $r_3/p$ by 
\[r_3/p-d_2(\bar\Phi_2^{-1}(\bar\Mfrak_3),\bar\Bcal_2),\]
the case $s=3$ now is a consequence of Lemma $\ref{lemSchubertvar}$.

Assume that the assertion holds for $s-1$. Then the fibers of $X$ over every point in $\Fcal_2$ are connected by induction hypothesis. And hence $X$ is connected if and only if $\pr_2(X)\subset \Fcal_2$ is connected. Here we use the properness of the projection $\pr_2$.\\
Let $y_{s-1}$ be the projection of $\bar\Mfrak_s$ onto the convex subset $\bar\Phi_{s-1}(\bar\Bcal_{s-1})$. This is indeed a convex subset as $\bar\Phi_{s-1}$ is injective and maps an apartment in $\bar\Bcal_{s-1}$ onto an apartment in $\bar\Bcal_s$. If we set $t_s=d_s(\bar\Mfrak_s,y_{s-1})$, then
\begin{align*}
d_s(\bar\Phi_{s-1}(z),\bar\Mfrak_s)\leq r_s & \Leftrightarrow d_s(\bar\Phi_{s-1}(z),y_{s-1})\leq r_s-t_s\\
&\Leftrightarrow d_{s-1}(z,\bar\Phi_{s-1}^{-1}(y_{s-1}))\leq \tfrac{1}{p}(r_s-t_s),
\end{align*}
for all $z\in\bar\Bcal_{s-1}$. 
Inductively we denote by $y_{s-i}$ the projection of $y_{s-i+1}\in\bar\Bcal_s$ onto the convex subset $(\bar\Phi_{s-1}\circ\dots\circ\bar\Phi_{s-i})(\bar\Bcal_{s-i})\subset \bar\Bcal_s$ and write $t_{s-i+1}$
for the integer defined by
\[d_s(y_{s-i+1},y_{s-i})=p^{i-1}t_{s-i+1}.\]
This is indeed an integer as $d_{s-i+1}(\bar\Phi_{s-i}(x),y)$ is an integer for all vertices $x\in\bar\Bcal_{s-i}$ and $y\in\bar\Bcal_{s-i+1}$ (and $\Phi_j$ multiplies distances by $p$). Now we have
\[\pr_2(X(\bar\Fbb_p))=\left\{
\Mfrak_2\subset N_2 \left| 
{\begin{array}{*{20}c}
\dete\ \Mfrak_2=u^{m_2}\bar\Fbb_p[[u]]\\
d_2(\bar\Mfrak_2,\bar\Phi_1(\bar\Mfrak_1))\leq r_2 \\
d_2(\bar\Mfrak_2,(\bar\Phi_{s-1}\circ\dots\circ\bar\Phi_2)^{-1}(y_2))\leq R/p^{s-2}
\end{array}}\right.
\right\},\]
where $R=(r_s-t_s)+p(r_{s-1}-t_{s-1})+\dots+p^{s-3}(r_3-t_3)$. 
This can be seen as follows: Inductively (starting with the observation above) we can show that for any lattice $\Mfrak_{s-i}\subset N_{s-i}$ we have
\[d_{s-i}(\bar\Mfrak_{s-i},(\bar\Phi_s\circ\dots\circ\bar\Phi_{s-i})^{-1}(y_{s-i}))\leq \frac{1}{p^i}\sum_{j=0}^{i-1} p^j(r_{s-j}-t_{s-j})\]
if and only if there exist an $\bar\Mfrak_{s-i+1}$ such that 
\begin{align*}
d_{s-i+1}(\bar\Mfrak_{s-i+1},\bar\Phi_{s-i}(\bar\Mfrak_{s-i}))& \leq r_{s-i+1} \\ 
d_{s-i}(\bar\Mfrak_{s-i+1},(\bar\Phi_s\circ\dots\circ\bar\Phi_{s-i+1})^{-1}(y_{s-i+1}))&\leq \frac{1}{p^{i-1}}\sum_{j=0}^{i-2} p^{j}(r_{s-j}-t_{s-j}).
\end{align*}
Now the claim follows from Lemma $\ref{lemSchubertvar}$.
\end{proof}
We return to the setting of the first section. Let $\nu$ be a cocharacter as in $(\ref{cocharnu})$ and define 
\begin{align*}
m_i&=\langle\dete\circ\pr_i,\nu\rangle,\\
r_i&=\langle\alpha\circ\pr_i,\nu\rangle.
\end{align*}
We define a subset 
\begin{equation}\label{Bcalnu}
\Bcal(\nu)=\Bcal_1(\nu)\times\dots\times\Bcal_n(\nu)\subset \Bcal,
\end{equation}
where $\Bcal_i(\nu)$ is the convex hull of all lattices $\Mfrak_i\subset N_i$
such that $\dete\ \Mfrak_i=u^{s_i}\bar\Fbb_p[[u]]$, where the $s_i$ are integers defined by the set of equations 
\begin{align*}
p^2s_i+m_{i+1}&=s_{i+1} & \text{for}\ i=1,\dots,n-1\\
p^2s_n+m_1&=s_1.
\end{align*}
\begin{rem}\label{remdisteven}
Note that $\Bcal(\nu)$ maps homeomorphically onto $\bar\Bcal$. But not every vertex of $\Bcal(\nu)$ is defined by a lattice. More precisely, if $\Mfrak_i$ and $\mathfrak{N}_i$ are lattices in $\Bcal_i(\nu)$, then $d_i(\bar\Mfrak_i,\bar{\mathfrak{N}}_i)$ is even.
\end{rem} 
The integers $s_i$ are defined in a way such that $C_\nu(A)(\bar\Fbb_p)\subset \Bcal(\nu)$:
If $\Mfrak=(\Mfrak_1,\dots,\Mfrak_n)\in C_\nu(A)(\bar\Fbb_p)$ with $\dete\ \Mfrak_i=u^{s_i}\bar\Fbb_p[[u]]$, then
\[\dete\ \Mfrak_{i+1}=u^{m_{i+1}}\dete\ \Phi(\phi^\ast\Mfrak_i)\]
implies $s_{i+1}=m_{i+1}+p^2s_i$.
\begin{cor}\label{Corconnected}
Denote by $\pr_i:C_\nu(A)\rightarrow \Fcal_i$ the projection onto the $i$-th factor in $(\ref{proji})$. Then the fibers of $\pr_i$ are connected.
\end{cor}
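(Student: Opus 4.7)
The plan is to describe the fibers of $\pr_i$ using Proposition \ref{ptsinbuilding} and then apply Proposition \ref{propconnected} directly. Fix a lattice $\Mfrak_i \in \Fcal_i(\bar\Fbb_p)$. By Proposition \ref{ptsinbuilding}, the $\bar\Fbb_p$-points of the fiber $\pr_i^{-1}(\Mfrak_i)$ are precisely those $(n-1)$-tuples $(\Mfrak_j)_{j \neq i}$ of lattices satisfying
\[
d_j(\bar\Phi_{j-1}(\bar\Mfrak_{j-1}), \bar\Mfrak_j) \leq r_j \quad \text{and} \quad \dete\, \Mfrak_j = u^{m_j} \dete\, \Phi_{j-1}(\phi^\ast\Mfrak_{j-1})
\]
for $j = 1, \dots, n$ with cyclic indices. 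So $\Mfrak_i$ plays the role of both the ``input'' of $\bar\Phi_i$ and the ``output'' of $\bar\Phi_{i-1}$ in a cyclic chain of $n$ conditions imposed on $n-1$ free lattices.

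The next step is to cyclically relabel by setting $\tilde N_k = N_{i+k-1}$, $\tilde\Phi_k = \Phi_{i+k-1}$, and $\tilde r_k = r_{i+k-1}$ for $k=1,\ldots,n$ (indices mod $n$), so that $\tilde N_1 = \tilde N_{n+1} = N_i$. Then I would apply Proposition \ref{propconnected} with $s = n+1$, taking both boundary data $\tilde\Mfrak_1$ and $\tilde\Mfrak_{n+1}$ to be $\Mfrak_i$. The determinants of the intermediate lattices are not free: the multiplicative recursion $\dete\, \tilde\Mfrak_{k+1} = u^{\tilde m_{k+1}} \dete\,\tilde\Phi_k(\phi^\ast \tilde\Mfrak_k)$ forces $\dete\, \tilde\Mfrak_k$ to be a specific power of $u$ determined by $\dete\,\Mfrak_i$, and these powers are exactly the constants that Proposition \ref{propconnected} requires. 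The distance bounds carry over without change.

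To close the argument, I would verify the remaining hypothesis: since $A \in LG(\Fbb) = G(\Fbb((u)))$, every component $A_j \in \GL_2(\bar\Fbb_p((u)))$, so the linearisations of all $\tilde\Phi_k$ are isomorphisms. For $n \geq 2$ we have $s = n+1 \geq 3$, and Proposition \ref{propconnected} immediately yields connectedness of the fiber. The case $n = 1$ must be handled separately, but is trivial: $\pr_1$ is then the identity of $C_\nu(A)$, so its fibers are singletons. I expect the only delicate point to be the bookkeeping of the cyclic relabelling and the verification that the inductive determinant conditions from Proposition \ref{ptsinbuilding} indeed pin down the determinants of the intermediate lattices as fixed powers of $u$; the real substance of the argument is entirely contained in Proposition \ref{propconnected}.
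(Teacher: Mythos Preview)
Your proposal is correct and follows the same approach as the paper: describe the fiber via Proposition~\ref{ptsinbuilding}, observe that the determinant conditions pin down $\dete\,\Mfrak_j$ as fixed powers of $u$ (the paper phrases this as $C_\nu(A)(\bar\Fbb_p)\subset\Bcal(\nu)$, using the integers $s_i$ defined just before the corollary), and then apply Proposition~\ref{propconnected} with $s=n+1$ after cyclically relabelling. Your explicit treatment of the $n=1$ case and the verification that the linearisations of the $\Phi_j$ are isomorphisms are details the paper leaves implicit, but the substance is identical.
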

\begin{proof}
With the notations from above we find $C_\nu(A)(\bar\Fbb_p)\subset\Bcal(\nu)$.
Then the claim follows from the description of the closed points in Proposition $\ref{ptsinbuilding}$ and Proposition $\ref{propconnected}$. 
\end{proof}
%%%%%%%%%%%%%%%%%%%%%%%%%%%%%%%%%%%%%%%%%%%%%%%%%%%%%%%%%%%%%%%%%%%%%%%%%%%%%%%%%%%%%%%%%%%%%%%%%%%%%%%%%%%%%%%%%%%%%%%%%%%%%%%%%%%%%%%%%%%%%%%%%%%%%%%%%%%%%%%%%%%%%%%%%%%
\section{The simple case}
In this section we will prove the following theorem.
\begin{theo}\label{theoirred}
Let $A\in LG(\bar\Fbb_p)$ and assume that the object 
\[(N,\Phi_A)=((\bar\Fbb_p\otimes_{\Fbb_p}k)((u))^2,\Phi_A)\]
is simple, where $\Phi_A$ is defined as in $(\ref{linsemilin})$. Let $\nu$ be a cocharacter as in $(\ref{cocharnu})$. Then the Kisin variety $C_\nu(A)$ is geometrically connected. 
\end{theo}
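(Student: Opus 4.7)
The plan is to apply Corollary \ref{Corconnected}, which shows that the fibers of the projection $\pr_1 \colon C_\nu(A) \to \Fcal_1$ are connected. Since $\pr_1$ is proper (being the restriction to a closed subscheme of a projection between projective schemes), $C_\nu(A)$ is connected if and only if its image $\pr_1(C_\nu(A))$ is connected; the whole proof reduces to establishing the latter.

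Set $\bar\Psi = \bar\Phi_n \circ \cdots \circ \bar\Phi_1 \colon \bar\Bcal_1 \to \bar\Bcal_1$. By Proposition \ref{Abbinbuilding}, $\bar\Psi$ preserves geodesics and multiplies distances by $p^n$. Iterating the analysis from the proof of Proposition \ref{propconnected} around the full cycle $1\to 2\to\cdots\to n\to 1$, I would show that $\Mfrak_1 \in \pr_1(C_\nu(A))(\bar\Fbb_p)$ if and only if $\dete\,\Mfrak_1 = u^{s_1}\bar\Fbb_p[[u]]$ and
\[ d_1\bigl(\bar\Mfrak_1,\bar\Psi(\bar\Mfrak_1)\bigr) \leq R := r_1 + p\,r_n + p^2 r_{n-1} + \cdots + p^{n-1} r_2.\]
Necessity follows from the triangle inequality together with Proposition \ref{Abbinbuilding}(ii), propagated around the cycle; sufficiency is an inductive existence argument for the intermediate lattices $\Mfrak_2,\dots,\Mfrak_n$ exactly as in Proposition \ref{propconnected}.

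The simplicity hypothesis is translated into a dynamical statement: a fixed point of $\bar\Psi$ at infinity of $\bar\Bcal_1$ corresponds to a line $L_1 \subset N_1$ stable under the linearisation of $\Psi = \Phi_n\circ\cdots\circ\Phi_1$, and setting $L_{i+1} = \Phi_i(\phi^\ast L_i)$ then produces a rank-one sub-$\phi$-module of $(N,\Phi)$, contradicting simplicity. Thus under the simple hypothesis, $\bar\Psi$ has no fixed end on $\partial\bar\Bcal_1$ (a fixed vertex in the interior is not excluded, but leads to an easy subcase, where $\delta(x)=d_1(x,\bar\Psi(x))$ is bounded below by a multiple of the distance to the fixed vertex, forcing sublevel sets to be bounded balls). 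Using the absence of fixed ends, I would prove that the displacement function $\delta$ on $\bar\Bcal_1$ is proper and has convex sublevel sets: if $\delta$ were bounded along a sequence escaping to infinity, one could extract a convergent subsequence of ends and obtain a fixed end of $\bar\Psi$; convexity along geodesics is a tree-geometric consequence of $\bar\Psi$ being geodesic-preserving and uniformly scaling distances.

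With this in place, the closed points of $\pr_1(C_\nu(A))$ form a connected subtree of $\bar\Bcal_1$. The final step is to promote this to scheme-theoretic connectedness: exhaust the subtree by an increasing family of balls around a reference vertex, each producing a Schubert subvariety of $\Fcal_1$ via Lemma \ref{lemSchubertvar} (hence connected), and observe that consecutive balls overlap, so their union is connected. The main obstacle is the dynamical analysis of $\delta$ --- establishing connectedness of the sublevel sets for the $p^n$-expanding self-map $\bar\Psi$ of the tree $\bar\Bcal_1$ --- where the standard convexity arguments available for displacement functions of isometries have to be adapted to the expanding situation, and the no-fixed-point-at-infinity condition must be used in an essential way to exclude configurations with several disjoint ``attracting'' regions.
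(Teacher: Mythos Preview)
There is a genuine gap in your characterization of $\pr_1(C_\nu(A))$. You assert that
\[
\Mfrak_1\in\pr_1(C_\nu(A))(\bar\Fbb_p)\ \Longleftrightarrow\ d_1\bigl(\bar\Mfrak_1,\bar\Psi(\bar\Mfrak_1)\bigr)\le R,
\]
but only the implication ``$\Rightarrow$'' holds. The maps $\bar\Phi_i:\bar\Bcal_i\to\bar\Bcal_{i+1}$ are injective and $p$-expanding but \emph{not surjective}: the image of an edge is a path of length $p$, and at each of its $p-1$ interior vertices the ambient tree $\bar\Bcal_{i+1}$ branches into directions not contained in $\bar\Phi_i(\bar\Bcal_i)$. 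This is precisely why the proof of Proposition~\ref{propconnected} must introduce the correction terms $t_j$ and why $\bar\Phi_2^{-1}(\bar\Mfrak_3)$ is only visible after a ramified base change. Concretely, take $r_1=0$: then membership in $\pr_1(C_\nu(A))$ forces $\bar\Mfrak_1=\bar\Phi_n(\bar\Mfrak_n)$ for some lattice $\Mfrak_n$, which confines $\bar\Mfrak_1$ to the proper subset $\bar\Phi_n(\{\text{vertices of }\bar\Bcal_n\})$, a constraint invisible to the displacement bound $\delta(\bar\Mfrak_1)\le R=p\,r_n+\cdots+p^{n-1}r_2$. So $\pr_1(C_\nu(A))$ is in general only a proper subset of your sublevel set, and connectedness of the latter does not transfer. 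Iterating Proposition~\ref{propconnected} correctly would produce corrections $t_j$ that depend on $\bar\Mfrak_1$ itself, so you no longer get a sublevel set of a single function. (Incidentally, the part you flagged as the main obstacle---connectedness of $\{\delta\le R\}$---is actually easy: writing $T$ for the length along which the ray $[P_1,x)$ and its $\bar\Psi$-image agree, one computes $\delta(x)=(p^n-1)t$ for $t\le T$ and $(p^n+1)t-2T$ for $t\ge T$, which is increasing; so the sublevel sets are star-shaped about $P_1$.)

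The paper's argument avoids fixing the index $1$. It first proves (Proposition~\ref{fixpt}(iii)) that for every $\Mfrak\in C_\nu(A)$ there is \emph{some} $i$ with $P_i\in[\bar\Mfrak_i,\bar\Phi_{i-1}(\bar\Mfrak_{i-1})]$; this is exactly the geometric condition under which one has enough room at index $i$ to move $\bar\Mfrak_i$ by explicit $\Pbb^1$-families inside $\pr_i(C_\nu(A))$ (Proposition~\ref{mainstep}), pushing it toward the vertex $Q_i$ nearest $P_i$. The special points $\Mfrak(Q_i)$ of Lemmas~\ref{MQiinC} and~\ref{MQiMQj} then serve as basepoints that can be compared across different indices $i,j$ via the fiber-connectedness of Corollary~\ref{Corconnected}. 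In particular one obtains that $\pr_i(C_\nu(A))$ is a Schubert variety for a well-chosen $i$, rather than a closed-form description of $\pr_1(C_\nu(A))$.
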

The idea of the proof is to analyse the set of lattices in the set $\Bcal(\nu)$ defined in $(\ref{Bcalnu})$ that correspond to closed points in $C_\nu(A)$. In the following we will write $[x,y]$
for the geodesic between two points $x,y\in\bar\Bcal$ (resp. $x,y\in\bar\Bcal_i$).
\begin{prop}\label{fixpt}
Let $A\in LG(\bar\Fbb_p)$ and assume that the object 
\[(N,\Phi_A)=((\bar\Fbb_p\otimes k)((u))^2,\Phi_A)\] defined by $(\ref{linsemilin})$ is simple, i.e. there is no proper $\Phi_A$-stable subspace.\\
\noindent {\rm (i)} There is a unique point $P=(P_1,\dots,P_n)\in\bar\Bcal=\bar\Bcal_1\times\dots\times\bar\Bcal_n$ fixed by the induced map $\bar\Phi_A$.\\
\noindent {\rm (ii)} For all $i\in\{1,\dots,n\}$ the point $P_i\in\bar\Bcal_i$ is not a vertex.\\
\noindent {\rm (iii)} Let $\Mfrak=(\Mfrak_1,\dots,\Mfrak_n)\in\Fcal_G(\bar\Fbb_p)$. Then there is $i\in\{1,\dots,n\}$ such that 
\[P_i\in[\bar\Mfrak_i,\bar\Phi_{i-1}(\bar\Mfrak_{i-1})].\]
\end{prop}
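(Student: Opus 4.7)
The strategy is to reduce part~(i) to analyzing the self-map
$F := \bar\Phi_n\circ\cdots\circ\bar\Phi_1\colon\bar\Bcal_1\to\bar\Bcal_1$,
which by Proposition~\ref{Abbinbuilding}(ii) multiplies distances by $p^n$. A fixed point $P=(P_1,\dots,P_n)$ of $\bar\Phi_A$ on $\bar\Bcal$ corresponds, via $P_{i+1}=\bar\Phi_i(P_i)$, to a fixed point $P_1$ of $F$. Uniqueness is immediate: two fixed points would satisfy $d_1(P_1,Q_1)=d_1(F(P_1),F(Q_1))=p^n d_1(P_1,Q_1)$, hence $P_1=Q_1$. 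For existence I would invoke a Dieudonn\'e--Manin style normal form: via the decomposition $N\cong\prod_i N_i$, simplicity of $(N,\Phi_A)$ is equivalent to simplicity (absence of an invariant line) of the rank-$2$ $\phi^n$-module $(N_1,F)$, which then admits a basis $g_1,g_2$ with $F(g_1)=g_2$ and $F(g_2)=u^m g_1$ for some integer $m$. The apartment $\langle g_1\rangle\oplus\langle g_2\rangle$ is $F$-invariant, and on it $F$ acts in the coordinate $t=a-b$ as the affine map $t\mapsto -p^n t+m$, whose unique fixed point is $t_0=m/(1+p^n)$; this produces $P_1$.

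For part~(ii), simplicity forces $(1+p^n)\nmid m$: an invariant line $\bar\Fbb_p((u))\cdot(\alpha g_1+\beta g_2)$ exists if and only if $z=\alpha/\beta$ solves $z\cdot\phi^n(z)=u^m$, and comparing $u$-adic valuations shows that a solution in $\bar\Fbb_p((u))$ requires $(1+p^n)\mid m$. Consequently $t_0=m/(1+p^n)\notin\Z$, so $P_1$ lies strictly in the interior of an edge of the invariant apartment, and hence is not a vertex of $\bar\Bcal_1$; the argument for general~$i$ is identical after relabeling. More intrinsically, if $P_1=[\Mfrak_1]$ were a vertex, then $F(\phi^{n\ast}\Mfrak_1)=u^k\Mfrak_1$ for some integer $k$, and a determinant computation gives $v_u(\det F)=2k$, i.e.\ the slope of $(N_1,F)$ would be the integer $k$. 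But an isoclinic rank-$2$ $\phi^n$-module of integer slope over $\bar\Fbb_p((u))$ is a twist of an \'etale module, and every \'etale rank-$2$ $\phi^n$-module over $\bar\Fbb_p((u))$ is reducible (reduction modulo $u$ and a standard approximation argument give an invariant line, since any $\bar\Fbb_p$-linear endomorphism of a two-dimensional $\bar\Fbb_p$-vector space has an eigenvector over the algebraically closed field $\bar\Fbb_p$), contradicting simplicity.

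For part~(iii), by~(ii) the link at each $P_i$ consists of exactly two directions, and $\bar\Phi_i$ induces a bijection between the directions at $P_i$ and those at $P_{i+1}$. The key point is that $F$ acts on the two directions at $P_1$ as the transposition: on the invariant apartment $F$ is orientation-reversing (equivalently, $F$ interchanges $\langle g_1\rangle$ and $\langle g_2\rangle$), and this reversal restricts to the swap on the two half-edges at $P_1$. Suppose now, toward contradiction, that $P_i\notin[\bar\Mfrak_i,\bar\Phi_{i-1}(\bar\Mfrak_{i-1})]$ for every $i$ (the degenerate cases $\bar\Mfrak_i=P_i$ or $\bar\Phi_{i-1}(\bar\Mfrak_{i-1})=P_i$ are trivially fine). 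In the tree $\bar\Bcal_i$ this means that $\bar\Mfrak_i$ and $\bar\Phi_{i-1}(\bar\Mfrak_{i-1})$ lie in a common direction $d_i$ at $P_i$. Because $\bar\Phi_i(\bar\Mfrak_i)$ lies in direction $\bar\Phi_i(d_i)$ at $P_{i+1}$, and by assumption also in direction $d_{i+1}$, we obtain $d_{i+1}=\bar\Phi_i(d_i)$ for every~$i$. Iterating around the cycle yields $d_1=F(d_1)$, an $F$-invariant direction at $P_1$, contradicting the fact that $F$ is the swap.

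The main obstacle is the existence statement in~(i): establishing the Dieudonn\'e--Manin normal form over $\bar\Fbb_p((u))$ with the non-standard Frobenius $\phi(u)=u^p$, $\phi|_{\bar\Fbb_p}=\id$, requires care. An alternative that sidesteps the normal form is to consider the map $T:=F^{-1}\circ\pi$, where $\pi\colon\bar\Bcal_1\to F(\bar\Bcal_1)$ is the nearest-point projection onto the convex image; $T$ is a strict $p^{-n}$-contraction of the complete metric space $\bar\Bcal_1$, so Banach's theorem produces a unique $T$-fixed point $P_1$ with $F(P_1)=\pi(P_1)$, and one must then use simplicity to exclude the case $P_1\notin F(\bar\Bcal_1)$ and conclude that $F(P_1)=P_1$.
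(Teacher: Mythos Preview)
Your proposal is correct and follows essentially the same approach as the paper: the paper invokes Caruso's classification (what you call a ``Dieudonn\'e--Manin style normal form'') to obtain a basis with $\widetilde{\Phi}(b_1)=b_2$, $\widetilde{\Phi}(b_2)=au^s b_1$, locates the fixed point at $s/(p^n+1)$ on the invariant apartment, exhibits the explicit invariant line $\bar\Fbb_p((u))(\sqrt{a}\,u^{s/(p^n+1)}b_1+b_2)$ when $(p^n{+}1)\mid s$ for~(ii), and for~(iii) runs exactly your contradiction, phrased via geodesic inclusions $[\bar\Mfrak_3,\bar\Phi_2\bar\Phi_1(\bar\Mfrak_1)]\subset[\bar\Mfrak_3,\bar\Phi_2(\bar\Mfrak_2)]\cup[\bar\Phi_2(\bar\Mfrak_2),\bar\Phi_2\bar\Phi_1(\bar\Mfrak_1)]$ rather than your equivalent language of directions at the non-vertex~$P_i$. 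The only cosmetic differences are that the paper retains the unit $a\in\bar\Fbb_p^\times$ in the normal form (irrelevant on the building), and does not pursue your alternative slope/\'etale-reducibility argument for~(ii) or the Banach contraction variant for~(i).
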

\begin{proof}
\noindent (i) Again we write 
\[N=(\bar\Fbb_p\otimes k)((u))^2=N_1\times\dots\times N_n\]
and $\Phi_i=\Phi_A|_{N_i}:N_i\rightarrow N_{i+1}$. Then
\[\widetilde{\Phi}:=\Phi_n\circ\dots\circ\Phi_1=\Phi_A^n|_{N_1}:N_1\longrightarrow N_1\]
is a $\widetilde{\phi}$-linear endomorphism of $N_1$ whose linearisation is an isomorphism, where $\widetilde{\phi}:\bar\Fbb_p((u))\rightarrow \bar\Fbb_p((u))$
is the identity on $\bar\Fbb_p$ and maps $u$ to $u^{p^n}$.\\
Further $((\bar\Fbb_p\otimes k)((u))^2,\Phi_A)$ is simple if and only if $(N_1,\widetilde{\Phi})$ is. By Caruso's classification of simple objects, see \cite[Corollary 8]{Caruso}, there is an $\bar\Fbb_p((u))$-basis $b_1,b_2$ of $N_1$ such that
\begin{align*}
& \widetilde{\Phi}(b_1)=b_2 \\
& \widetilde{\Phi}(b_2)=a u^sb_1
\end{align*}
for some $a\in\bar\Fbb^\times_p$ and $s\in\Z$. Let $\Acal\subset \bar\Bcal_1$ be the apartment defined by $b_1,b_2$ and let $\Acal\cong \R$ be an isomorphism preserving the distance and under which the homothety class of $\bar\Fbb[[u]] b_1\oplus\bar\Fbb_p[[u]]b_2$ is mapped to $0$. Then one easily calculates that the preimage of $s/(p^n+1)$ is fixed under 
\[[\widetilde{\Phi}]=\overline{\widetilde{\Phi}}:\bar\Bcal_1\longrightarrow \bar\Bcal_1.\]
Denote this fixed point by $P_1$ and inductively define 
\[P_i=\bar\Phi_{i-1}(P_{i-1})\in\bar\Bcal_i\]
 for $i=2,\dots,n$. Then $P=(P_1,\dots,P_n)$ is fixed by $\bar\Phi_A$.
Further it is unique because of Proposition $\ref{Abbinbuilding}$, (ii). 

\noindent (ii) As the isomorphism $\Acal\cong \R$ maps the vertices of $\Acal$ exactly to the integers, the point $P_1$ is a vertex if and only if $p^n+1 | s$.
In  this case one easily checks that 
\[\bar\Fbb_p((u))(\sqrt{a}u^{s/(p^n+1)}b_1+b_2)\]
is a $\widetilde{\Phi}$-stable subspace. Contradiction. Hence $P_1$ is not a vertex.\\
Instead of constructing $P_i$ using $P_1$ we could also have used 
\[\Phi_A^n|_{N_i}:N_i\longrightarrow N_i\]
and hence the same argument shows that $P_i$ is not a vertex.

\noindent (iii) We first show that $P_1\in [\bar\Mfrak_1,[\widetilde{\Phi}](\bar\Mfrak_1)]$. Using the above notation we denote by $x_1$ and $x_2$ the vertices of $\bar\Bcal_1$ such that $P_1\in[x_1,x_2]$ and $d_1(x_1,x_2)=1$. As $\bar\Phi^n$ maps geodesics to geodesics it is enough to check that $x_1\in[P_1,\bar\Phi^n(x_2)]$ and vice versa. Using the standard form for $\Phi^n$ from above this is an easy computation.

Now assume $P_i\notin[\Mfrak_i,\bar\Phi_{i-1}(\bar\Mfrak_{i-1})]$ for all indices $i$. Then $P_2\notin [\bar\Mfrak_2,\bar\Phi_1(\bar\Mfrak_1)]$ implies
\[P_3=\bar\Phi_2(P_2)\notin [\bar\Phi_2(\bar\Mfrak_2),\bar\Phi_2(\bar\Phi_1(\bar\Mfrak_1))]\]
Together with $P_3\notin [\bar\Mfrak_3,\bar\Phi_2(\bar\Mfrak_2)]$ this implies $P_3\notin[\bar\Mfrak_3,\bar\Phi_2(\bar\Phi_1(\bar\Mfrak_1))]$, as 
\[[\bar\Mfrak_3,\bar\Phi_2(\bar\Phi_1(\bar\Mfrak_1))]\subset [\bar\Mfrak_3,\bar\Phi_2(\bar\Mfrak_2)]\cup[\bar\Phi_2(\bar\Mfrak_2),\bar\Phi_2(\bar\Phi_1(\bar\Mfrak_1))].\] 
Proceeding by induction this implies 
\[P_1\notin [\bar\Mfrak_1,\overline{\widetilde{\Phi}}(\bar\Mfrak_1)]=[\bar\Mfrak_1,(\bar\Phi_n\circ\dots\circ\bar\Phi_1)(\bar\Mfrak_1)],\]
contradicting the above.
\end{proof}
Given a cocharacter $\nu$ as in $(\ref{cocharnu})$, recall the definition of
\[\Bcal(\nu)=\Bcal_1(\nu)\times\dots\times\Bcal_n(\nu)\] 
from $(\ref{Bcalnu})$. This set maps homeomorphically onto $\bar\Bcal$.
For $i\in\{1,\dots,n\}$ we write $Q_i\in\bar\Bcal_i$ for the unique vertex with minimal distance from $P_i$ that is the homothety class of some lattice in $\Bcal_i(\nu)$.
This vertex is indeed unique as $P_i$ is not a vertex, and the distance between two homothety classes of lattices in $\Bcal_i(\nu)$ is always even, compare Remark $\ref{remdisteven}$.
 
We construct a lattice $\Mfrak(Q_i)\in\Bcal(\nu)$ as follows:
Let $\Mfrak(Q_i)_i\in\Bcal_i(\nu)$ be the unique lattice such that $[\Mfrak(Q_i)_i]=Q_i$. Then $\Mfrak(Q_i)_j$ is defined inductively for $j\neq i$ as the unique lattice in 
\[\left\{
\mathfrak{N}\subset N_j\left| 
\begin{array}{*{20}c}
 \mathfrak{N}\in\Bcal_j(\nu) \\
{} [\mathfrak{N}]\in [Q_j,\bar\Phi_{j-1}([\Mfrak(Q_i)_{j-1}])]\\
d_j([\mathfrak{N}],\bar\Phi_{j-1}([\Mfrak(Q_i)_{j-1}]))\leq r_j
\end{array}\right.
\right\}\]
with minimal distance from $Q_j$. 
\begin{lem}\label{MQiinC}
Let $i\in\{1,\dots,n\}$. If there exists $\Mfrak\in C_\nu(A)(\bar\Fbb_p)$ with $\bar\Mfrak_i=Q_i$, then $\Mfrak(Q_i)\in C_\nu(A)$.
\end{lem}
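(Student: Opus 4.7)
The plan is to apply Proposition~\ref{ptsinbuilding}: $\Mfrak(Q_i)\in C_\nu(A)(\bar\Fbb_p)$ if and only if, for every $j$, the determinant identity $\dete\,\Mfrak(Q_i)_j = u^{m_j}\dete\,\Phi_{j-1}(\phi^\ast\Mfrak(Q_i)_{j-1})$ and the distance bound $d_j([\Mfrak(Q_i)_j],\bar\Phi_{j-1}([\Mfrak(Q_i)_{j-1}]))\leq r_j$ both hold. The determinant identities are automatic because each component lies in $\Bcal_j(\nu)$ by construction, and the subset $\Bcal(\nu)$ was designed precisely so that these identities are forced by the integers $s_j$. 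The distance bound is built into the definition for $j\neq i$, so the substantive content is nonemptyness of the minimization set at each step and the closing inequality at $j=i$.

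Both reduce to an inductive comparison between $\Mfrak(Q_i)$ and the given $\Mfrak$, carried out in cyclic order starting from $i$. The invariant I would maintain is
\[ d_j\bigl([\Mfrak(Q_i)_j], Q_j\bigr) \;\leq\; d_j\bigl(\bar\Mfrak_j, Q_j\bigr), \]
which is vacuous at $j=i$ since $[\Mfrak(Q_i)_i]=Q_i=\bar\Mfrak_i$. For the step $j-1\to j$, apply $\bar\Phi_{j-1}$ to the inductive bound: by Proposition~\ref{Abbinbuilding}, distances scale by $p$ and geodesics go to geodesics, so $\bar\Phi_{j-1}([\Mfrak(Q_i)_{j-1}])$ is at most as far from $\bar\Phi_{j-1}(Q_{j-1})$ as $\bar\Phi_{j-1}(\bar\Mfrak_{j-1})$ is. Combining with $d_j(\bar\Mfrak_j,\bar\Phi_{j-1}(\bar\Mfrak_{j-1}))\leq r_j$ and tree-convexity (projection to a geodesic is distance-non-increasing) produces a candidate point on $[Q_j,\bar\Phi_{j-1}([\Mfrak(Q_i)_{j-1}])]$ within distance $r_j$ of the endpoint, showing the set is nonempty; minimality then yields the next instance of the invariant. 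The parity condition of Remark~\ref{remdisteven}, namely that distances inside $\Bcal_j(\nu)$ are even, must be tracked so that the candidate on the geodesic is actually the homothety class of a lattice in $\Bcal_j(\nu)$.

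The main obstacle is the closing step $j=i$, where $\Mfrak(Q_i)_i$ is already fixed (with class $Q_i$) and we need $d_i\bigl(Q_i, \bar\Phi_{i-1}([\Mfrak(Q_i)_{i-1}])\bigr)\leq r_i$. After the full cycle, the invariant at $j=i-1$ gives $d_{i-1}([\Mfrak(Q_i)_{i-1}], Q_{i-1}) \leq d_{i-1}(\bar\Mfrak_{i-1}, Q_{i-1})$. Pushing through $\bar\Phi_{i-1}$ and using that $\bar\Phi_{i-1}(Q_{i-1})$ and $Q_i$ are both controlled by their proximity to the fixed point $P_i=\bar\Phi_{i-1}(P_{i-1})$ of Proposition~\ref{fixpt} (each $Q_j$ is the nearest vertex of $\Bcal_j(\nu)$ to $P_j$, and $P_j$ is not a vertex by Proposition~\ref{fixpt}(ii)), together with the hypothesis $d_i(\bar\Mfrak_i,\bar\Phi_{i-1}(\bar\Mfrak_{i-1})) = d_i(Q_i, \bar\Phi_{i-1}(\bar\Mfrak_{i-1})) \leq r_i$, yields the desired bound via a triangle inequality in the tree. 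This closes the cycle and completes the verification.
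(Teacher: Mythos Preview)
Your overall strategy matches the paper's: reduce to the single closing inequality at $j=i$, and control it via an inductive comparison between $\Mfrak(Q_i)$ and the given $\Mfrak$ along the cycle. The paper's invariant is
\[
d_j\bigl(\bar\Phi_{j-1}([\Mfrak(Q_i)_{j-1}]),Q_j\bigr)\ \leq\ d_j\bigl(\bar\Phi_{j-1}(\bar\Mfrak_{j-1}),Q_j\bigr),
\]
which it proves by first establishing the same inequality with $P_j$ in place of $Q_j$ and then using the parity of the difference (Remark~\ref{remdisteven}) to pass to $Q_j$. Your invariant $d_j([\Mfrak(Q_i)_j],Q_j)\leq d_j(\bar\Mfrak_j,Q_j)$ is essentially equivalent after one application of $\bar\Phi_j$.

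The gap is in your inductive step. After applying $\bar\Phi_{j-1}$ you obtain a comparison relative to $\bar\Phi_{j-1}(Q_{j-1})$, but this point is \emph{not} $Q_j$; it can sit at distance up to roughly $p$ from $Q_j$, and an inequality of the form $d(\,\cdot\,,\bar\Phi_{j-1}(Q_{j-1}))\leq d(\,\cdot\,,\bar\Phi_{j-1}(Q_{j-1}))$ does not in general transfer to one centred at $Q_j$. Your appeal to ``tree-convexity (projection to a geodesic is distance-non-increasing)'' does not bridge this: projecting $\bar\Mfrak_j$ onto $[Q_j,\bar\Phi_{j-1}([\Mfrak(Q_i)_{j-1}])]$ gives no control on its distance to the endpoint $\bar\Phi_{j-1}([\Mfrak(Q_i)_{j-1}])$, so you cannot conclude the candidate lies within $r_j$ of it. The point that makes the induction go through is precisely that $P_j=\bar\Phi_{j-1}(P_{j-1})$ is $\bar\Phi$-equivariant, so the centred inequality survives pushforward; parity then lets you trade $P_j$ for $Q_j$ (both sides differ by an even integer, and the error introduced is $<2$). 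You invoke $P$ and parity only at the closing step $j=i$, but they are needed at every step. Rewriting your invariant with $P_j$ in place of $Q_j$ (or, equivalently, proving the paper's displayed inequality first) repairs the argument and brings it in line with the paper's proof.
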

\begin{proof}
We only need to check
\[d_i(\bar\Phi_{i-1}[\Mfrak(Q_i)_{i-1}],Q_i)\leq r_i.\]
By induction one easily sees that
\[d_j(\bar\Phi_{j-1}([\Mfrak(Q_i)_{j-1}]),Q_j)\leq d_j(\bar\Phi_{j-1}([\Mfrak_{j-1}]),Q_j),\]
for all $j$, using that this is true for $P_j$ instead of $Q_j$ and the fact that the difference between the left hand side and the right hand side is even.
The claim now follows from 
\[d_i(\bar\Phi_{i-1}(\Mfrak_{i-1}),Q_i)=d_i(\bar\Phi_{i-1}(\Mfrak_{i-1}),\Mfrak_i)\leq r_i.\]
\end{proof}
\begin{lem}\label{MQiMQj}
Let $i,j\in\{1,\dots,n\}$ such that $\Mfrak(Q_i),\Mfrak(Q_j)\in C_\nu(A)$. Then $\Mfrak(Q_i)_j=\Mfrak(Q_j)_j$ or $\Mfrak(Q_j)_i=\Mfrak(Q_i)_i$.
\end{lem}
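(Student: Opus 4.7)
I would proceed by contradiction, assuming $\Mfrak(Q_i)_j\neq\Mfrak(Q_j)_j$ and $\Mfrak(Q_j)_i\neq\Mfrak(Q_i)_i$. Since a lattice in $\Bcal_k(\nu)$ is determined by its homothety class (the determinant being fixed), these translate into $x_j\neq Q_j$ and $y_i\neq Q_i$, where I set $x_k:=[\Mfrak(Q_i)_k]$ and $y_k:=[\Mfrak(Q_j)_k]$. My goal is to deduce $\Mfrak(Q_i)=\Mfrak(Q_j)$, which would force $x_j=Q_j$ and contradict the assumption.

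The central ingredient is a \emph{trapping observation}: for any index $k$, if $P_k$ lies on the geodesic $[x_k,\bar\Phi_{k-1}(x_{k-1})]$, then $x_k=Q_k$ (and analogously for $y_k$). Indeed, by the greedy construction, $x_k$ already lies on the full geodesic $[Q_k,\bar\Phi_{k-1}(x_{k-1})]$ (trivially at the initial index $k=i$), so the sub-geodesic $[x_k,\bar\Phi_{k-1}(x_{k-1})]$ sits inside it; if $P_k$ lies on the sub-geodesic, the points $Q_k$, $x_k$, $P_k$ are collinear in that order, yielding $d_k(x_k,P_k)=L_k-d_k(Q_k,x_k)\leq L_k$ (with $L_k:=d_k(Q_k,P_k)$). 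Since $x_k\in\Bcal_k(\nu)$ and $Q_k$ is by construction the unique such class at minimal distance from $P_k$ (Remark \ref{remdisteven}), this forces $x_k=Q_k$.

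I pair this with a \emph{cascading principle}: if $x_k=Q_k$ for some $k$, then $\Mfrak(Q_i)=\Mfrak(Q_k)$. Starting from $\Mfrak(Q_i)_k=\Mfrak(Q_k)_k$ (both being the unique lattice in $\Bcal_k(\nu)$ with class $Q_k$), the common inductive rule propagates equality forward around the cycle; at the return index $i$, the constraint $d_i(Q_i,\bar\Phi_{i-1}(x_{i-1}))\leq r_i$ coming from $\Mfrak(Q_i)\in C_\nu(A)$ (Proposition \ref{ptsinbuilding}) ensures that the rule outputs the $Q_i$-class at index $i$, closing the cycle and yielding $\Mfrak(Q_k)=\Mfrak(Q_i)$, with $\Mfrak(Q_k)\in C_\nu(A)$. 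Proposition \ref{fixpt}(iii) applied to $\Mfrak(Q_i)$ then produces some index $k$ with $x_k=Q_k$: if $k=j$ we obtain the first conclusion directly, otherwise cascading gives $\Mfrak(Q_i)=\Mfrak(Q_k)$. Symmetrically, applying Proposition \ref{fixpt}(iii) to $\Mfrak(Q_j)$ furnishes $k'$ with $y_{k'}=Q_{k'}$; if $k'=i$ we obtain the second conclusion.

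The main obstacle is the remaining case $k\neq j$ and $k'\neq i$, where Proposition \ref{fixpt}(iii) alone does not pinpoint the desired indices. I expect to resolve this by constructing an auxiliary lattice tuple $\Mfrak'$ interpolating $\Mfrak(Q_i)$ and $\Mfrak(Q_j)$ along suitable cyclic arcs, applying Proposition \ref{fixpt}(iii) to $\Mfrak'$, and using the trapping observation at a seam index to force a coincidence contradicting $x_j\neq Q_j$ and $y_i\neq Q_i$. The delicate part will be arranging the interpolation so that $[\Mfrak'_\ell]$ still lies on $[Q_\ell,\bar\Phi_{\ell-1}([\Mfrak'_{\ell-1}])]$ at the seam indices, keeping the trapping lemma applicable and ultimately collapsing the two greedy constructions to a single cycle.
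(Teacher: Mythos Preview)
Your argument is incomplete. The ``remaining case'' $k\neq j$, $k'\neq i$ is precisely the generic situation, and you only \emph{expect} to resolve it by splicing the two greedy sequences into an auxiliary tuple, without actually constructing it or verifying that the trapping observation applies at the seams. Nothing you have established forces Proposition~\ref{fixpt}(iii) to single out the indices $i$ or $j$, and there is no evident reason why an interpolated tuple would do so either. As it stands, the proposal reduces the lemma to a harder-looking problem.

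The paper's proof is far more direct and uses neither Proposition~\ref{fixpt} nor a contradiction setup. After relabelling so that $j<i$, assume $[\Mfrak(Q_i)_j]\neq Q_j$. One then shows by induction on $\ell=j+1,\dots,i$ that
\[
d_\ell\bigl(Q_\ell,[\Mfrak(Q_j)_\ell]\bigr)\ \leq\ d_\ell\bigl(Q_\ell,[\Mfrak(Q_i)_\ell]\bigr).
\]
The base case holds because $[\Mfrak(Q_j)_j]=Q_j$, and the inductive step is a monotonicity property of the greedy rule: the output distance to $Q_\ell$ is a non-decreasing function of the input distance $d_\ell(Q_\ell,\bar\Phi_{\ell-1}(\,\cdot\,))$, and the latter inherits the inequality from step $\ell-1$. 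At $\ell=i$ the right-hand side vanishes, giving $[\Mfrak(Q_j)_i]=Q_i=[\Mfrak(Q_i)_i]$, which is the second alternative in the statement. Your trapping observation and cascading principle are correct, but they are unnecessary detours; the lemma follows from a single monotone comparison of the two greedy sequences between indices $j$ and $i$.
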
  
\begin{proof}
Assume $j<i$ and $[\Mfrak(Q_i)_j]\neq Q_j$. Then the constructions of $\Mfrak(Q_i)$ and $\Mfrak(Q_j)$ imply 
\[d_{j+1}(Q_{j+1},[\Mfrak(Q_j)_{j+1}])\leq d_{j+1}(Q_{j+1},[\Mfrak(Q_i)_{j+1}]).\]
Proceeding by induction the constructions yield 
\[d_i(Q_i,[\Mfrak(Q_j)_i])\leq d_i(Q_i,[\Mfrak(Q_i)_i])=0\]
and hence $\Mfrak(Q_i)_i=\Mfrak(Q_j)_i$.
\end{proof}
\begin{lem}\label{P1nachGrass}
Let $\mathfrak{N}_1,\mathfrak{N}_2\subset \bar\Fbb_p((u))^2$ be lattices with $\dete\ \mathfrak{N}_1=\dete\ \mathfrak{N}_2$. 
Let $y$ denote the midpoint of the geodesic \[[\bar{\mathfrak{N}}_1,\bar{\mathfrak{N}}_2]\subset \Bcal(\PGL_2(\bar\Fbb_p((u)))).\]
There exists a morphism
\[\chi=\chi_{\bar{\mathfrak{N}}_1,\bar{\mathfrak{N}}_2}:\Pbb^1\longrightarrow \Fcal_{\GL_2}\]
such that $\chi(0)=\mathfrak{N}_1$ and $\chi(\infty)=\mathfrak{N}_2$ and such that
\[d([\chi(z)],y)=\tfrac{1}{2}d(\bar{\mathfrak{N}}_1,\bar{\mathfrak{N}}_2)\]
for all $z\in\bar\Fbb_p$. Note that the right hand side is an integer.
\end{lem}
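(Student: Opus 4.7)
The plan is to write $\chi$ down explicitly in coordinates adapted to an apartment of the $\PGL_2$-tree containing $\bar{\Nfrak}_1$, $\bar{\Nfrak}_2$ and $y$. Set $2k := d(\bar{\Nfrak}_1, \bar{\Nfrak}_2)$; this is even by the same parity argument as in Remark~\ref{remdisteven} (applicable because $\dete\,\Nfrak_1 = \dete\,\Nfrak_2$), so $y$ is a vertex of the tree. Pick a lattice representative $\mathfrak{L}$ of $y$ and a basis $e_1, e_2$ of $\bar\Fbb_p((u))^2$ whose apartment contains $\bar{\Nfrak}_1$, $\bar{\Nfrak}_2$ and $y$. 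After rescaling the basis and the representative of $y$, I may arrange
\[
\mathfrak{L} = \bar\Fbb_p[[u]]\, e_1 \oplus \bar\Fbb_p[[u]]\, e_2, \quad \Nfrak_1 = u^k \bar\Fbb_p[[u]]\, e_1 \oplus \bar\Fbb_p[[u]]\, e_2, \quad \Nfrak_2 = \bar\Fbb_p[[u]]\, e_1 \oplus u^k \bar\Fbb_p[[u]]\, e_2,
\]
placing $\bar{\Nfrak}_1$ and $\bar{\Nfrak}_2$ at graph-distance $k$ from $y$ on opposite sides of the apartment, consistent with $\dete\,\Nfrak_1 = \dete\,\Nfrak_2 = u^k\,\dete\,\mathfrak{L}$.

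Next, I would define $\chi\colon \Pbb^1 \to \Fcal_{\GL_2}$ on homogeneous coordinates by
\[
\chi([a:b]) \,=\, \bar\Fbb_p[[u]]\,(a e_1 + b e_2) \,+\, u^k \mathfrak{L}.
\]
This is independent of the scaling of $(a, b)$, and on the two standard affine charts of $\Pbb^1$ it is a polynomial family of lattices in the affine coordinate, so it promotes to a morphism of ind-schemes rather than merely a map on $\bar\Fbb_p$-points. Evaluating at $[0:1]$ and $[1:0]$ (i.e. at $z=0$ and $z=\infty$ under $z = a/b$) gives $\chi(0) = \Nfrak_1$ and $\chi(\infty) = \Nfrak_2$ on the nose.

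For the distance claim: by construction $u^k \mathfrak{L} \subseteq \chi([a:b]) \subseteq \mathfrak{L}$, and the quotient $\chi([a:b])/u^k\mathfrak{L}$ is the cyclic $\bar\Fbb_p[[u]]/u^k$-submodule of $\mathfrak{L}/u^k\mathfrak{L} \cong (\bar\Fbb_p[[u]]/u^k)^2$ generated by the reduction of $(a,b)$. Since at least one of $a, b$ is a nonzero element of $\bar\Fbb_p$ and hence a unit in $\bar\Fbb_p[[u]]/u^k$, this submodule is free of rank one, so the elementary divisors of $\chi([a:b])$ inside $\mathfrak{L}$ are $(1, u^k)$, yielding $d([\chi([a:b])], y) = k = \tfrac12 d(\bar{\Nfrak}_1, \bar{\Nfrak}_2)$ and $\dete\,\chi([a:b]) = u^k\,\dete\,\mathfrak{L}$, as required. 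The main creative input is writing down the formula for $\chi$; once that is guessed, every verification is a direct calculation, and the only mildly delicate point—that $\chi$ really is a morphism of ind-schemes rather than just a set map—is resolved by the explicit polynomial form on the two affine charts of $\Pbb^1$.
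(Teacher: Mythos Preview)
Your argument is correct: the explicit family $\chi([a:b])=\bar\Fbb_p[[u]](ae_1+be_2)+u^k\mathfrak{L}$ in a basis diagonalising both lattices does everything the lemma asks, and your verifications of the endpoints, the elementary divisors $(1,u^k)$ relative to $\mathfrak{L}$, and the scheme-theoretic nature of $\chi$ via the two affine charts are all sound. The paper itself does not give a proof here but simply refers to \cite[Lemma~3.7]{ich}; your explicit construction is the natural one and is essentially what that reference does.
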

\begin{proof}
This is similar to \cite[Lemma 3.7]{ich}.
\end{proof}
\begin{prop}\label{mainstep}
Let $\Mfrak=(\Mfrak_1,\dots,\Mfrak_n)\in C_\nu(A)$. Then there exists an index $i\in\{1,\dots,n\}$ such that for all $\mathfrak{N}_i\subset N_i$ with $\dete\,\mathfrak{N}_i=\dete\,\Mfrak_i$ and $y_i\in[\bar P_i,\bar\Mfrak_i]$, where $y_i$ denotes the midpoint of $[\bar{\mathfrak{N}_i},\bar\Mfrak_i]$, there is a morphism
\[\chi:\Pbb^1_{\bar\Fbb_p}\longrightarrow \Fcal_i\]
such that $\chi(\Pbb^1)\subset \pr_i(C_\nu(A))$ and $\chi(0)=\Mfrak_i$ and $\chi(\infty)=\mathfrak{N}_i$.
\end{prop}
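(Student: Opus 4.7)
The plan is to pick the index $i\in\{1,\dots,n\}$ via Proposition \ref{fixpt}(iii) so that $P_i\in[\bar\Mfrak_i,\bar\Phi_{i-1}(\bar\Mfrak_{i-1})]$, to apply Lemma \ref{P1nachGrass} to the pair $(\Mfrak_i,\mathfrak{N}_i)$ in order to obtain the morphism $\chi\colon\Pbb^1\to\Fcal_i$ with $\chi(0)=\Mfrak_i$, $\chi(\infty)=\mathfrak{N}_i$ and $d_i([\chi(z)],y_i)=\tfrac{1}{2}d_i(\bar{\mathfrak{N}}_i,\bar\Mfrak_i)$ for $z\in\bar\Fbb_p$, and then to exhibit, for each $z\in\Pbb^1(\bar\Fbb_p)$, a tuple $\Mfrak^{(z)}\in C_\nu(A)$ with $\Mfrak^{(z)}_i=\chi(z)$. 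This is enough to conclude $\chi(\Pbb^1)\subset\pr_i(C_\nu(A))$.

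The central geometric input is the inequality $d_i([\chi(z)],P_i)\leq d_i(\bar\Mfrak_i,P_i)$ for every $z\in\Pbb^1(\bar\Fbb_p)$. The hypothesis $y_i\in[\bar P_i,\bar\Mfrak_i]$ forces $\bar{\mathfrak{N}}_i$ and $P_i$ onto a common ray from $y_i$ opposite to $\bar\Mfrak_i$, while for $z\in\bar\Fbb_p^\times$ the class $[\chi(z)]$ branches from $y_i$ in a third direction so the geodesic to $P_i$ passes through $y_i$ and one computes equality. The case $z=\infty$ is handled by a case analysis according to whether $\bar{\mathfrak{N}}_i$ lies between $y_i$ and $P_i$ or beyond $P_i$.

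Setting $\Mfrak^{(z)}_i=\chi(z)$ I would inductively define $\Mfrak^{(z)}_{i+k}$ for $k=1,\dots,n-1$ as a lattice in $\Bcal_{i+k}(\nu)$ lying on the geodesic $[P_{i+k},\bar\Phi_{i+k-1}([\Mfrak^{(z)}_{i+k-1}])]$ which satisfies both $d_{i+k}([\Mfrak^{(z)}_{i+k}],\bar\Phi_{i+k-1}([\Mfrak^{(z)}_{i+k-1}]))\leq r_{i+k}$ and the invariant $d_{i+k}([\Mfrak^{(z)}_{i+k}],P_{i+k})\leq d_{i+k}(\bar\Mfrak_{i+k},P_{i+k})$. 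Writing $\alpha_j=d_j(\bar\Mfrak_j,P_j)$, the bound $|p\alpha_{j-1}-\alpha_j|\leq r_j$ coming from $\Mfrak\in C_\nu(A)$ and Proposition \ref{Abbinbuilding}(ii), together with the inductive hypothesis $d_{i+k-1}([\Mfrak^{(z)}_{i+k-1}],P_{i+k-1})\leq\alpha_{i+k-1}$, shows that the interval of admissible positions along the geodesic is non-empty.

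Granting the inductive construction, the closing condition at index $i$ follows from the triangle inequality combined with the invariant:
\[d_i(\bar\Phi_{i-1}([\Mfrak^{(z)}_{i-1}]),[\chi(z)])\leq d_i(\bar\Phi_{i-1}([\Mfrak^{(z)}_{i-1}]),P_i)+d_i(P_i,[\chi(z)])\leq p\alpha_{i-1}+\alpha_i=d_i(\bar\Mfrak_i,\bar\Phi_{i-1}(\bar\Mfrak_{i-1}))\leq r_i,\]
where the penultimate equality is exactly the collinearity $P_i\in[\bar\Mfrak_i,\bar\Phi_{i-1}(\bar\Mfrak_{i-1})]$ guaranteed by the choice of $i$. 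The main obstacle I anticipate lies in the inductive step: the allowed interval on the geodesic must contain an actual lattice in $\Bcal_{i+k}(\nu)$, not merely an interior point of an edge, and Remark \ref{remdisteven} imposes a parity restriction on which homothety classes along the geodesic come from lattices with the prescribed determinant. Matching the length of the admissible interval against this parity condition is the delicate point of the argument.
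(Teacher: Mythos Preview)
Your opening moves---selecting $i$ via Proposition~\ref{fixpt}(iii), producing $\chi$ from Lemma~\ref{P1nachGrass}, and the closing estimate at index $i$ using the collinearity $P_i\in[\bar\Mfrak_i,\bar\Phi_{i-1}(\bar\Mfrak_{i-1})]$---all match the paper. The divergence is entirely in how the intermediate components are built, and the obstacle you flag is not a delicate point to be checked but a genuine failure of the scheme.

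The paper does \emph{not} try to place the new lattice on the ray $[P_{i+k},\bar\Phi_{i+k-1}([\Mfrak^{(z)}_{i+k-1}])]$ and does not work with the inequality $d_{i+k}\leq\alpha_{i+k}$. Instead it propagates the midpoint: set $y_{i+k}=\bar\Phi_{i+k-1}(y_{i+k-1})$ and ask whether $y_{i+k}\in[Q_{i+k},\bar\Mfrak_{i+k}]$. If not, a short tree lemma shows $y_{i+k}$ separates $\bar\Mfrak_{i+k}$ from every $\bar\Phi_{i+k-1}([\chi_{i+k-1}(z)])$, so the \emph{original} $\Mfrak_{i+k}$ already satisfies the $(i{+}k)$-th constraint for all $z$ and the procedure stops there. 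If yes, one applies Lemma~\ref{P1nachGrass} again, reflecting $\bar\Mfrak_{i+k}$ across $y_{i+k}$ to obtain a morphism $\chi_{i+k}$. Because the output of that lemma is a lattice with $\det\chi_{i+k}(z)=\det\Mfrak_{i+k}$, the parity question never arises; and since in this case $y_{i+k}\in[P_{i+k},\bar\Mfrak_{i+k}]$, one gets the \emph{equality} $d_{i+k}([\chi_{i+k}(z)],P_{i+k})=\alpha_{i+k}$ throughout, which is what makes the final step close up.

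Your construction, by contrast, picks a direction from $P_{i+k}$ that varies with $z$ and then seeks a $\Bcal_{i+k}(\nu)$-lattice on that ray inside a window that can have length $<2$. This can fail outright: if $\bar\Mfrak_{i+k}=Q_{i+k}$, so $\alpha_{i+k}=d_{i+k}(Q_{i+k},P_{i+k})<1$, and the ray leaves $P_{i+k}$ through the other endpoint of its edge, then the nearest admissible lattice on the ray is at distance $2-\alpha_{i+k}>\alpha_{i+k}$ from $P_{i+k}$, so no choice maintains your invariant; and abandoning the ``on the geodesic'' requirement does not help, since taking $Q_{i+k}$ itself would require $d_{i+k}(Q_{i+k},P_{i+k})+p\beta\leq r_{i+k}$, which the hypotheses do not force. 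Maintaining only $\leq\alpha_{i+k}$ discards exactly the information needed; the fix is to preserve distances exactly by building the $\chi_{i+k}$ as reflections via Lemma~\ref{P1nachGrass}, as the paper does.
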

\begin{proof}
Let $i\in\{1,\dots,n\}$ such that $P_i\in[\bar\Mfrak_i,\bar\Phi_{i-1}(\bar\Mfrak_{i-1})]$. This index exists because of Proposition $\ref{fixpt}$, (ii). If $\bar\Mfrak_i=Q_i$, then we are done.
Assume this is not the case. 

With the notation from Lemma $\ref{P1nachGrass}$ we consider the morphism $\chi=\chi_{\bar\Mfrak_i,\bar{\mathfrak{N}}_i}$. Then $\chi(0)=\Mfrak_i$ and $\chi(\infty)=\mathfrak{N}_i$.
As $\pr_i(C_\nu(A))$ is closed it is sufficient to show that $\chi(\mathbb{A}^1(\bar\Fbb_p))\subset \pr_i(C_\nu(A)(\bar\Fbb_p))$. By construction we have for all $z\in\bar\Fbb_p$,
\[d_i([\chi(z)],y_i)=d_i([\chi(\infty)],y_i),\]
where $y_i$ denotes the midpoint of $[\bar{\mathfrak{N}}_i,\bar\Mfrak_i]$. As $y_i\in[P_i,\bar\Mfrak_i]$ we have
\begin{align*}
 d_i([\chi(z)],P_i) & =d_i([\Mfrak_i],P_i)\ ,\\
 d_i([\chi(z)],\bar\Phi_{i-1}([\Mfrak_{i-1}])) & = d_i([\chi(z)],P_i)+d_i(P_i,\bar\Phi_{i-1}([\Mfrak_{i-1}]))\\ 
&= d_i([\Mfrak_i],P_i)+d_i(P_i,\bar\Phi_{i-1}([\Mfrak_{i-1}]))\\
&=d_i([\Mfrak_i],\bar\Phi_{i-1}([\Mfrak_{i-1}]))
\end{align*}
for all $z\in\bar\Fbb_p$.
\begin{figure}\label{figur1} 
\par
\hspace{0in}
\begin{center}
\includegraphics[scale=0.4]{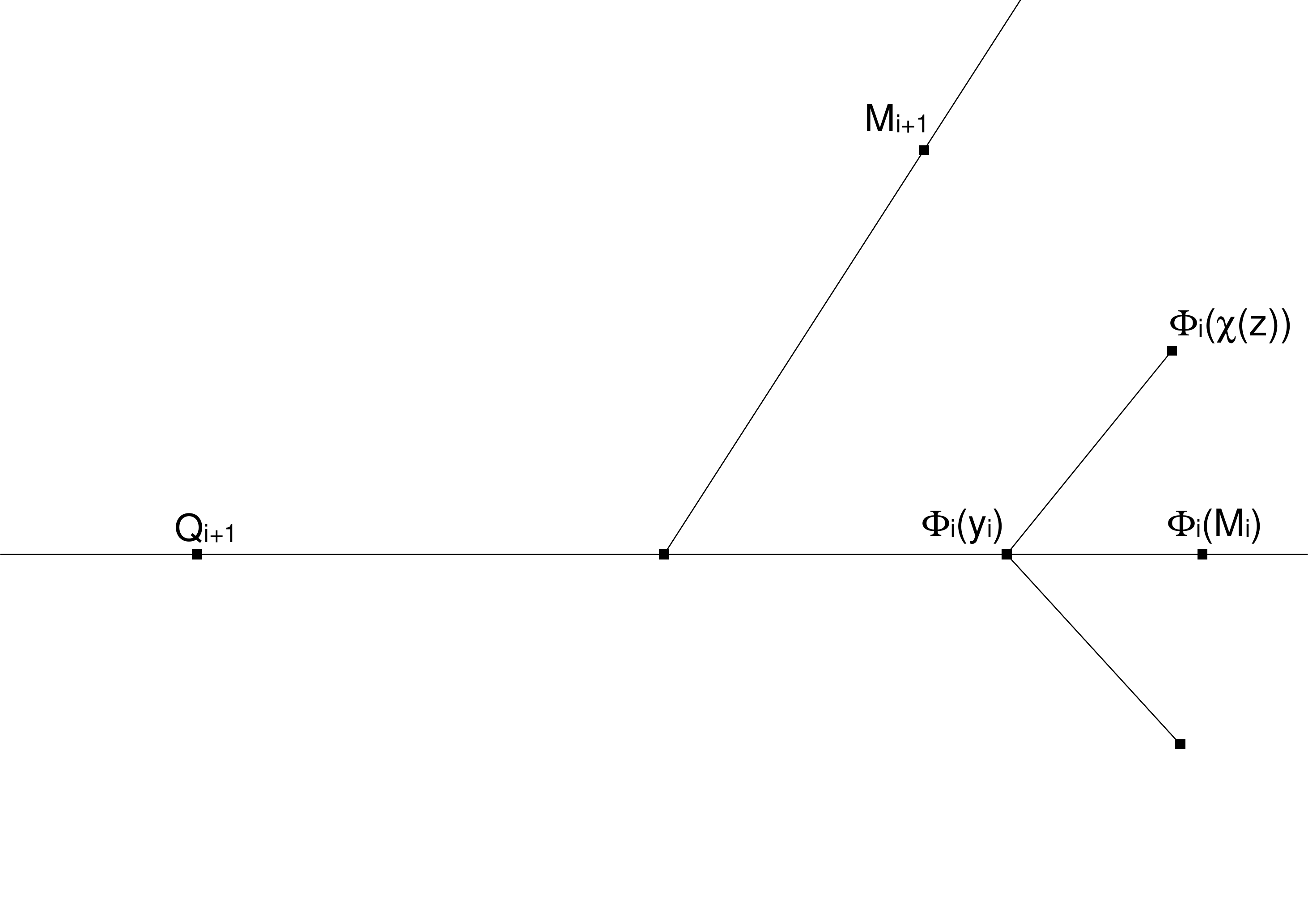}
\end{center}
\caption{The relative position of $\bar\Phi_i(\chi(z))$ and $\bar\Mfrak_{i+1}=M_{i+1}$ in the building.}
\end{figure}

%\begin{figure}[hp!]
%   \centering
%    \psfrag{v}{$V(\eta_1)$}
%    %\psfrag{v2}{$V(\sigma)$}
%    %\psfrag{v3}{$V(\eta_2)$}
%    %\psfrag{v}{$V$}
%   \includegraphics[width=0.5\textwidth]{figur1.pdf}
%   \caption{The case where $V = V(\eta_1) \cup V(\sigma) \cup 
%V(\eta_2)$; we draw $V(\sigma)$ thick just to emphasise its presence; 
%note that for simplicity the configuration is depicted in dimension $2$ 
%instead of $3$.}
%   \label{fig:triple_neighbourood}
%\end{figure}

\begin{lem}\label{geodesics}
If $\bar\Phi_i(y_i)\notin [Q_{i+1},\bar\Mfrak_{i+1}]$, then  $\bar\Phi_i(y_i)\in [\bar\Mfrak_{i+1},\bar\Phi_i([\chi(z)])]$ for all $z\in\bar\Fbb_p$.
\end{lem}
\begin{proof}
We can choose an apartment $\bar\Acal\subset \bar\Bcal_{i+1}$ containing the points $Q_{i+1}$, $P_{i+1}$, $\bar\Phi_i(Q_i)$, $\bar\Phi_i(y_i)$ and $\bar\Phi_i(\Mfrak_i)$.
We denote by $z$ the projection of $\bar\Mfrak_{i+1}$ onto this apartment. Denote by $\bar\Acal_{-}$ the half line in $\bar\Acal$ starting at $\bar\Phi_i(y_i)$ and containing $Q_{i+1}$. Our assumption means $z\in\bar\Acal_-$. But we also have $\bar\Phi_i(Q_i)\in\bar\Acal_-$ as the corresponding fact holds true for $P_i$ instead of $Q_i$. The claim now follows from
\[[\bar\Phi_i(Q_i),\bar\Phi_i(y_i)]\cap[\bar\Phi_i([\chi(z)]),\bar\Phi_i(y_i)]=\{\bar\Phi_i(y_i)\}.\] 
\end{proof}
Assume first that $\bar\Phi_i(y_i)\notin[Q_{i+1},\bar\Mfrak_{i+1}]$. By Lemma $\ref{geodesics}$ we find
\begin{align*}
d_{i+1}([\Mfrak_{i+1}],\bar\Phi_i([\chi(z)]))&= d_{i+1}([\Mfrak_{i+1}],\bar\Phi_i(y_i))+d_{i+1}(\bar\Phi_i(y_i),\bar\Phi([\chi(z)]))\\
&=d_{i+1}([\Mfrak_{i+1}],\bar\Phi_i(y_i))+d_{i+1}(\bar\Phi_i(y_i),\bar\Phi([\Mfrak_i]))\\
&=d_{i+1}([\Mfrak_{i+1}],\bar\Phi_i([\Mfrak_i]))\leq r_{i+1}
\end{align*}
for all $z\in\bar\Fbb_p$ (compare Fig. $1$).  
It follows that 
\[(\Mfrak_1,\dots,\Mfrak_{i-1},\chi(z),\Mfrak_{i+1},\dots,\Mfrak_n)\in C_\nu(A)\]
for all $z$ and hence we are done. 

If $\bar\Phi_i(y_i)\in [Q_{i+1},\bar\Mfrak_{i+1}]$, then consider the morphism
\[\chi_{i+1}=\chi_{\bar\Mfrak_{i+1},z_{i+1}}|_{\Abb^1}:\Abb^1\longrightarrow \Fcal_{i+1},\]
where $z_{i+1}$ is the point in $\bar\Bcal_{i+1}$ defined as follows. The point $\bar\Phi_i(y_i)$ is the midpoint of the geodesic $[z_{i+1},\bar\Mfrak_{i+1}]$ and there is an apartment $\bar\Acal\subset\bar\Bcal_{i+1}$ such that 
\[[z_{i+1},\bar\Mfrak_{i+1}],[Q_{i+1},\bar\Mfrak_{i+1}]\subset \bar\Acal,\]
compare also Fig. $2$. 
Then, by construction, we have $\bar\Phi_i(y_i)\in[[\chi(z)],[\chi_{i+1}(z)]]$ for all $z$ and hence
\begin{align*}
d_{i+1}([\chi_{i+1}(z)],\bar\Phi_i([\chi(z)])) &= d_{i+1}([\chi_{i+1}(z)],\bar\Phi_i(y_i))+d_{i+1}(\bar\Phi_i(y_i),\bar\Phi_i([\chi(z)]))\\
&=d_{i+1}([\chi_{i+1}(0)],\bar\Phi_i(y_i))+d_{i+1}(\bar\Phi_i(y_i),\bar\Phi_i([\chi(0)]))\\
&= d_{i+1}([\chi_{i+1}(0)],\bar\Phi_i([\chi(0)])); \\
d_{i+1}([\chi_{i+1}(z)],P_{i+1}) &= d_{i+1}([\chi_{i+1}(0)],P_{i+1})
\end{align*}
for all $z\in\bar\Fbb_p$ (compare Fig. $2$). 
\begin{figure}\label{figur2} 
\par
\hspace{0in}
\begin{center}
\includegraphics[scale=0.4]{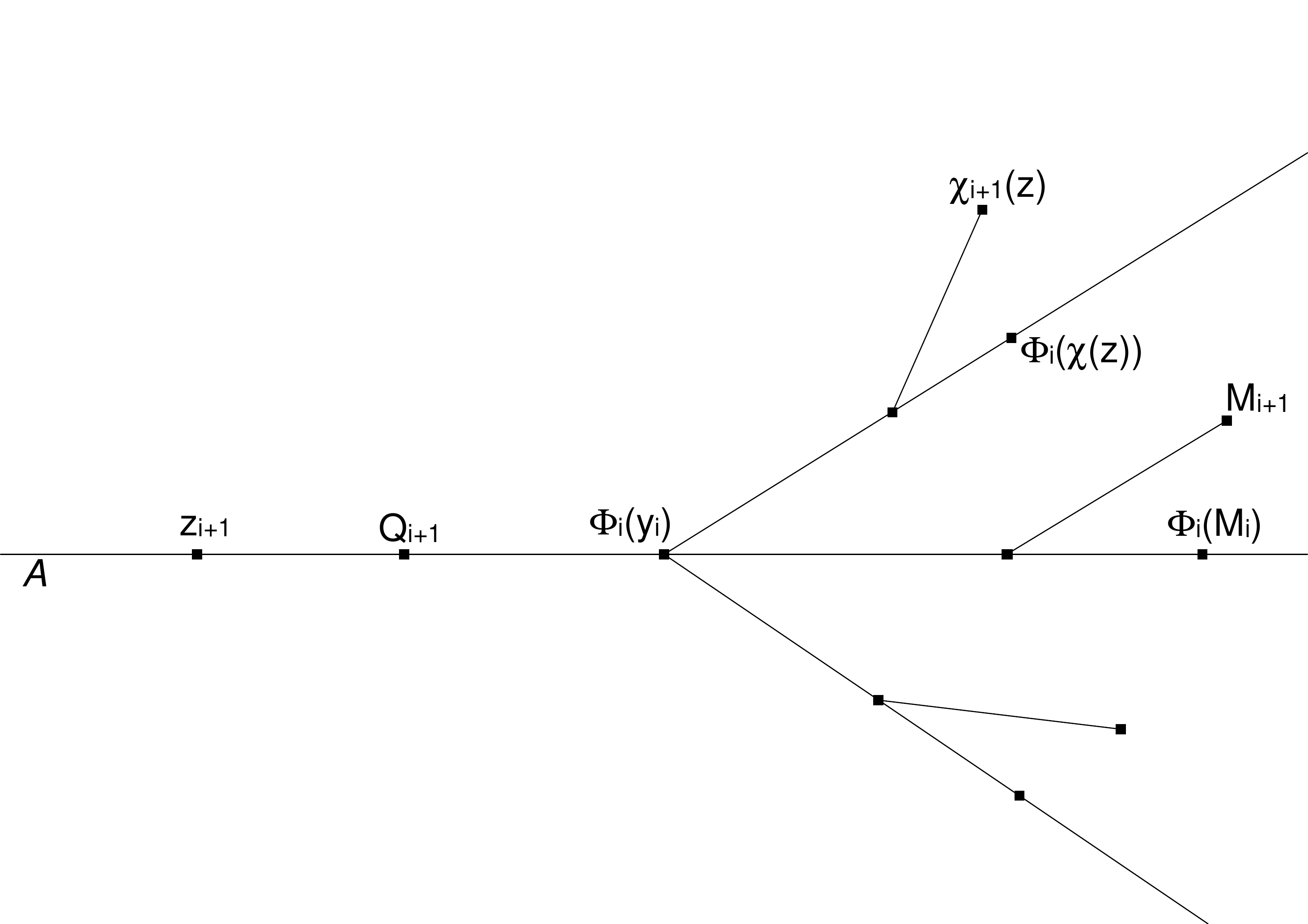}
\end{center}
\caption{The relative position of $\bar\Phi_i(\chi(z))$ and $\overline{\chi_{i+1}(z)}$ in the building.}
\end{figure}

Denote by $y_{i+1}=\bar\Phi_i(y_i)$ the midpoint of $[z_{i+1},\bar\Mfrak_{i+1}]$.
If $\bar\Phi_{i+1}(y_{i+1})\notin[Q_{i+2},\bar\Mfrak_{i+2}]$, then 
\[d_{i+2}([\Mfrak_{i+2}],\bar\Phi_{i-1}([\chi_{i+1}(z)]))=d_{i+2}([\Mfrak_{i+2}],\bar\Phi_{i-1}([\chi_{i+1}(0)]))\]
for all $z\in\bar\Fbb_p$ by the same argument as above and hence 
\[(\Mfrak_1,\dots,\Mfrak_{i-1},\chi(z),\chi_{i+1}(z),\Mfrak_{i+2},\dots,\Mfrak_{n})\in C_\nu(A),\]
and we are done. 

Otherwise, if $\bar\Phi_{i+1}(y_{i+1})\in[Q_{i+2},\bar\Mfrak_{i+2}]$, we proceed inductively as above. If this procedure does not stop we end up with maps
\[\chi_j:\Abb_{\bar\Fbb_p}^1\longrightarrow \Fcal_j\]
for $j=1,\dots,n$,
where $\chi_i=\chi|_{\Abb^1}$ and $\chi_j(0)=\Mfrak_j$ for all $j$. Further 
\begin{align*}
d([\chi_j(z)],P_j)&= d([\chi_j(0)],P_j) &&\text{for all}\ z\in\bar\Fbb_p\\
d([\chi_j(z)],\bar\Phi_{j-1}([\chi_{j-1}(z)])) &= d([\chi_j(0)],\bar\Phi_{j-1}([\chi_{j-1}(0)])) &&\text{for all}\ j\neq i.
\end{align*}
Finally our constructions imply that the midpoint of $[P_j,[\chi_j(z)]]$ is independent of $z\in\bar\Fbb_p$ and this implies (together with $P_i\in [\bar\Mfrak_i,\bar\Phi_{i-1}(\bar\Mfrak_{i-1})]$) that
\[d_i([\chi_i(z)],\bar\Phi_{i-1}([\chi_{i-1}(z)])) = d_i([\chi_i(0)],\bar\Phi_{i-1}([\chi_{i-1}(0)]))\leq r_i.\]
Hence we have $(\chi_1(z),\dots,\chi_n(z))\in C_\nu(A)$ for all $z\in\bar\Fbb_p$.
\end{proof}
\begin{cor}
There exists an index $i\in\{1,\dots,n\}$ such that the projection $\pr_i(C_\nu(A))\subset \Fcal_i$ is isomorphic to a Schubert variety.
\end{cor}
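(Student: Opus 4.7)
My approach is to identify an index $i \in \{1,\dots,n\}$ such that $\pr_i(C_\nu(A))$ coincides exactly with a ``ball'' in $\Fcal_i$, namely the set of lattices of a fixed determinant whose homothety class is within a prescribed distance of a fixed vertex of the tree $\bar\Bcal_i$. Lemma~\ref{lemSchubertvar} then identifies this ball with a Schubert variety, yielding the corollary.

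To locate this $i$, I would pick a closed point $\Mfrak \in C_\nu(A)(\bar\Fbb_p)$ maximising the distance $d_j(\bar\Mfrak_j, P_j)$ over all coordinates $j \in \{1,\dots,n\}$ and all $\bar\Fbb_p$-points of $C_\nu(A)$; this maximum exists because the distances in question are bounded above by $r_j$ via Proposition~\ref{ptsinbuilding} and take values in a discrete set. Let $i$ be the maximising index. Applied to this extremal $\Mfrak$, Proposition~\ref{mainstep} produces morphisms $\Pbb^1 \to \pr_i(C_\nu(A))$ connecting $\Mfrak_i$ to every lattice $\mathfrak{N}_i$ of the same determinant whose midpoint with $\bar\Mfrak_i$ lies on $[P_i, \bar\Mfrak_i]$. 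By the maximality of $d_i(\bar\Mfrak_i, P_i)$, these curves sweep out the entire sphere around $P_i$ of that radius (intersected with $\Bcal_i(\nu)$). Applying the argument iteratively to new extremal points on this sphere, and invoking Lemmas~\ref{MQiinC} and~\ref{MQiMQj} to match the vertex $Q_i$ across different base points, I would build up the closed ball around $Q_i$ of the appropriate radius inside $\pr_i(C_\nu(A))$.

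The matching upper bound should come from Proposition~\ref{ptsinbuilding} together with the scaling identity $d_{j+1}(\bar\Phi_j(x), \bar\Phi_j(y)) = p\, d_j(x,y)$ of Proposition~\ref{Abbinbuilding}(ii): iterating the constraints $d_j(\bar\Mfrak_j, \bar\Phi_{j-1}(\bar\Mfrak_{j-1})) \leq r_j$ around the cycle $1 \to 2 \to \dots \to n \to 1$, and centring at the fixed point $P = \bar\Phi_A(P)$ of Proposition~\ref{fixpt}, yields a uniform upper bound on $d_i(\bar\Mfrak_i, P_i)$ matching the radius realised above. I expect the hard part to be ensuring that the index $i$ selected by maximality actually coincides with the index arising in Proposition~\ref{fixpt}(iii)—i.e.\ that at the extremal $\Mfrak$ one truly has $P_i \in [\bar\Mfrak_i, \bar\Phi_{i-1}(\bar\Mfrak_{i-1})]$ so that Proposition~\ref{mainstep} can be applied precisely at index $i$—and, relatedly, that iterating the construction of Proposition~\ref{mainstep} exhausts the full ball rather than a proper subset. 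Both of these points reduce to careful geodesic bookkeeping in the trees $\bar\Bcal_j$, exploiting that each $\bar\Phi_j$ is a geodesic-preserving $p$-Lipschitz expansion that sends apartments to apartments.
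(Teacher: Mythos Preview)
Your overall strategy---show that $\pr_i(C_\nu(A))$ is a ball around $Q_i$ of some radius $R$, then invoke Lemma~\ref{lemSchubertvar}---matches the paper's. But you overcomplicate both halves of the argument, and you misidentify where the work lies.

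For the \emph{upper bound}, the paper simply defines $R=\max\{d_i(\bar\Mfrak_i,P_i)\mid \Mfrak\in C_\nu(A)(\bar\Fbb_p)\}$ for the chosen fixed $i$; then $\pr_i(C_\nu(A))\subseteq\{\mathfrak{N}_i:\dete\,\mathfrak{N}_i=u^{s_i}\bar\Fbb_p[[u]],\ d_i(\bar{\mathfrak{N}}_i,P_i)\le R\}$ is tautological. There is no need to iterate the constraints $d_j(\bar\Mfrak_j,\bar\Phi_{j-1}(\bar\Mfrak_{j-1}))\le r_j$ around the cycle, and Lemmas~\ref{MQiinC} and~\ref{MQiMQj} are not used at all in this corollary. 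For the \emph{lower bound}, observe that if $d_i(\bar{\mathfrak{N}}_i,P_i)=d_i(\bar\Mfrak_i,P_i)$, then the midpoint of $[\bar{\mathfrak{N}}_i,\bar\Mfrak_i]$ automatically lies on $[P_i,\bar\Mfrak_i]$ (we are in a tree); so Proposition~\ref{mainstep}, applied repeatedly, sweeps out the full sphere of radius $R$ in $\pr_i(C_\nu(A))$. Since $\pr_i$ is proper the image is closed, and by maximality of $R$ the closure of the sphere is all of $\pr_i(C_\nu(A))$.

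The genuine issue you flag---whether the index produced by Proposition~\ref{mainstep} agrees with your chosen $i$---is exacerbated by your choice to maximise over \emph{all} coordinates $j$ simultaneously. The paper instead fixes $i$ first, as any index for which some $\Mfrak$ satisfies $P_i\in[\bar\Mfrak_i,\bar\Phi_{i-1}(\bar\Mfrak_{i-1})]$ (such an $i$ exists by Proposition~\ref{fixpt}(iii) applied to any point), and only then maximises $d_i(\bar\Mfrak_i,P_i)$. This avoids the cross-index comparison entirely. The paper's phrase ``applying Proposition~\ref{mainstep} several times'' is admittedly terse on why the relevant index stays equal to $i$ under iteration, but your alternative choice of $i$ makes the bookkeeping strictly harder, not easier.
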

\begin{proof}
Let $i\in\{1,\dots,n\}$ be as in the Proposition above, i.e. such that there exists $\Mfrak\in C_\nu(A)(\bar\Fbb_p)$ such that $P_i\in[\bar\Mfrak_i,\bar\Phi_{i-1}(\bar\Mfrak_{i-1})]$.
We set
\[R=\max\{d_i(\bar\Mfrak_i,P_i)\mid \Mfrak=(\Mfrak_1,\dots,\Mfrak_n)\in C_\nu(A)(\bar\Fbb_p)\},\]
and fix an $\Mfrak_i$, where this maximum is obtained.
Applying Proposition $\ref{mainstep}$ several times we find that
\[\{\mathfrak{N}_i\subset N_i\mid \dete\,\mathfrak{N}_i=\dete\,\Mfrak_i\ \text{and}\ d_i(\bar{\mathfrak{N}}_i,P_i)=R\}\subset\pr_i(C_\nu(A)(\bar\Fbb_p)).\]
As $\pr_i$ is projective, the projection $\pr_i(C_\nu(A))$ is closed. Hence the closure of the above subset is also contained in $\pr_i(C_\nu(A))$, and by maximality of $R$ it has to be all of $\pr_i(C_\nu(A))$. 
\end{proof}
\begin{proof}[Proof of Theorem \ref{theoirred}]
Let $x,y\in C_\nu(A)$. By Proposition $\ref{mainstep}$ there exist $i\in\{1,\dots,n\}$ and $\Mfrak\in C_\nu(A)(\bar\Fbb_p)$ with $\bar\Mfrak_i=Q_i$ such that $\Mfrak_i$ and $x_i$ lie in the same connected component of $\pr_i(C_\nu(A))$. By Lemma $\ref{MQiinC}$ we have $\Mfrak(Q_i)\in C_\nu(A)$. As the fibers of $\pr_i: C_\nu(A)\rightarrow \Fcal_i$ and $\pr_j:C_\nu(A)\rightarrow \Fcal_j$ are connected by Corollary $\ref{Corconnected}$, it follows that $x$ and $\Mfrak(Q_i)$ lie in the same connected component of $C_\nu(A)$.
Similarly there exists $j\in\{1,\dots,n\}$ such that $y$ and $\Mfrak(Q_j)$ lie in the same connected component of $C_\nu(A)$.
Now, using Lemma $\ref{MQiMQj}$ and Corollary $\ref{Corconnected}$ again, it follows that $\Mfrak(Q_i)$ and $\Mfrak(Q_j)$ lie in the same connected component of $C_\nu(A)$ and hence the same is true of $x$ and $y$.
\end{proof}

\section{Application to deformation spaces}
Finally we want to state a consequence of the main result. In this section we assume $p>2$.\\
Let $\bar\rho:G_K\rightarrow \GL_2(\Fbb)$ be an irreducible $2$-dimensional continuous representation of the absolute Galois group $G_K={\rm Gal}(\bar K/K)$ of $K$ with coefficients in a finite extension $\Fbb$ of $\Fbb_p$. We assume that $\bar\rho$ is flat. In this case the flat deformation functor of Ramakrishna is pro-representable by a complete local noetherian $W(\Fbb)$-algebra $R^{\rm fl}$. 

Recall that we considered a subfield $K_\infty$ of $\bar K$ obtained by adjoining a compatible system of $p^n$-th roots of a fixed uniformizer $\pi\in\Ocal_K$. 
Its absolute Galois group $G_{K_\infty}$ is identified with the absolute Galois group of a local field in characteristic $p$ and hence there is a $\phi$-module $(N_{\bar\rho},\Phi_{\bar\rho})$ of rank $2$ over $(k\otimes_{\Fbb_p}\Fbb)((u))$ associated with the restriction to $G_{K_\infty}$ of the Tate-twist $\bar\rho(-1)$.
As $\bar\rho$ is irreducible and flat, the $\phi$-module $(N_{\bar\rho},\Phi_{\bar\rho})$ is simple.

Fix a cocharacter
\[\mu:\Gbb_{m,\bar\Q_p}\longrightarrow (\Res_{K/\Q_p}\GL_2)_{\bar\Q_p}\]
which is dominant with respect to the restriction of the Borel subgroup of upper triangular matrices in $\GL_2$.
The cocharacter is given by a tuple $(\mu_\psi)_\psi$, where $\mu_\psi$ is a dominant cocharacter of $\GL_2$ and $\psi$ runs over all embeddings $K\hookrightarrow \bar\Q_p$.
Assume that $\mu_\psi$ is given by $t\mapsto {\rm diag}(t^{a_\psi},t^{b_\psi})$. We write $R^{{\rm fl},\mu}$ for the quotient of $R^{\rm fl}$ corresponding to those deformations $\xi:G_K\rightarrow \GL_2(\Ocal_E)$, where $\Ocal_E$ is the ring of integers in some extension $E$ of $W(\Fbb)[1/p]$, such that the Hodge-Tate weights of $\xi$ are given by $\mu$, i.e. the jumps of the filtrations on
\[D_{\rm cris}(\xi\otimes_{\Ocal_E}\bar\Q_p)_K\otimes_{K,\psi}\bar\Q_p\]
are given by $\{a_\psi,b_\psi\}$. We may assume that $a_\psi,b_\psi\in\{0,1\}$ as otherwise $R^{{\rm fl},\mu}$ is empty.
\begin{cor}
The scheme $\Spec(R^{{\rm fl},\mu})$ is connected.
\end{cor}
\begin{proof}
We construct a dominant cocharacter
\[\nu:\Gbb_{m,\bar\Fbb_p}\longrightarrow (\Res_{k/\Fbb_p}\GL_2)_{\bar\Fbb_p}\]
as follows: The cocharacter $\nu$ is given by cocharacters $\nu_{\bar\psi}$ of $\GL_2$, where $\bar\psi$ runs over all embeddings $k\hookrightarrow \bar\Fbb_p$.

For each $\bar\psi$ we define $\nu_{\bar\psi}(t)=(t^{\alpha_{\bar\psi}},t^{\beta_{\bar\psi}})$, where (with the notations from above)
\begin{align*}
\alpha_{\bar\psi}&=\sum_{\psi\hspace{-3mm}\mod\pi=\bar\psi}a_\psi \\
\beta_{\bar\psi}&=\sum_{\psi\hspace{-3mm}\mod\pi=\bar\psi}b_\psi.
\end{align*}
By \cite[Corollary 2.4.10]{Kisin} the connected components of $\Spec(R^{{\rm fl},\mu})$ are in bijection with those of $C_\nu(\Phi_{\bar\rho})$ which yields the claim.
\end{proof}

\medskip
\noindent 
{Mathematisches Institut der Universit\"at Bonn\\ Endenicher Allee 60\\ 53115 Bonn, Germany}\\
\emph{E-mail:}\\
{hellmann@math.uni-bonn.de}

\end{document}